\newcommand{\fulltitle}{Stochastic collocation method for computing eigenspaces of parameter-dependent operators}
\newcommand{\shorttitle}{Eigenspaces of parameter-dependent operators}
\def\a{\alpha}
\def\b{\beta}
\def\g{\gamma}
\def\.{\cdot}
\def\iu{\mathrm{i}}
\newcommand{\R}{\mathbb{R}}
\newcommand{\C}{\mathbb{C}}
\newcommand{\N}{\mathbb{N}}
\newcommand{\norm}[2]{\left\lVert#1\right\rVert_{#2}}
\newcommand{\mbf}[1]{\mathbf{#1}}
\newcommand{\mbb}[1]{\mathbb{#1}}
\newcommand{\mc}[1]{\mathcal{#1}}
\newcommand{\supp}{\mathop{\mathrm{supp}}}
\DeclareMathOperator*{\essinf}{ess\,inf}
\DeclareMathOperator*{\dist}{dist}
\DeclareMathOperator*{\spa}{span}
\newtheorem{thm}{Theorem}
\newtheorem{lmm}{Lemma}
\newtheorem{cor}{Corollary}
\newtheorem{prp}{Proposition}
\newtheorem{rmk}{Remark}
\newtheorem{exm}{Example}
\title{}
\begin{document}

\title{\fulltitle}
\author{Luka Grubi{\v s}i{\' c} \footnote{University of Zagreb, Faculty of Science, Department of Mathematics. Bijeni{\v c}ka 30, 10000 Zagreb, Croatia; E-mail: luka.grubisic@math.hr; The work of the author has been supported by the Croatian Science Foundation grant IP-2019-04-6268. We gratefully acknowledge the support.} \and Harri Hakula \footnote{Aalto University, Department of Mathematics and Systems Analysis, P.O. Box 11100, FI-00076 Aalto, Finland; E-mail: harri.hakula@aalto.fi; The work of the author was supported by the (FP7/2007--2013) ERC grant agreement no 339380.} \and Mikael Laaksonen\footnote{Aalto University, Department of Mathematics and Systems Analysis, P.O. Box 11100, FI-00076 Aalto, Finland; E-mail: mikael.j.laaksonen@aalto.fi; The work of the author was supported by the Magnus Ehrnrooth Foundation.}}

\maketitle

\begin{abstract}
We consider computing eigenspaces of an elliptic self-adjoint operator depending on a countable number of parameters in an affine fashion. The eigenspaces of interest are assumed to be isolated in the sense that the corresponding eigenvalues are separated from the rest of the spectrum for all values of the parameters. We show that such eigenspaces can in fact be extended to complex-analytic functions of the parameters and quantify this analytic dependence in way that leads to convergence of sparse polynomial approximations. A stochastic collocation method on an anisoptropic sparse grid in the parameter domain is proposed for computing a basis for the eigenspace of interest. The convergence of this method is verified in a series of numerical examples based on the eigenvalue problem of a stochastic diffusion operator.

\bigskip

\noindent{\it Keywords\/}: Eigenvalue problems, error estimation, finite elements

\noindent{\it AMS subject classifications\/}: 65C20, 65N12, 65N15, 65N25, 65N30
\end{abstract}

\section{Introduction}

Multiparametric eigenvalue problems, i.e., eigenvalue problems of operators that depend on a large number of input parameters, arise in a variety of contexts. One may think of optimization of the spectrum of structures which depend on a number of design parameters, but also uncertainty quantification of engineering systems with data uncertainty. Recent literature has considered examples of mechanical vibration problems, where a parametrization of the uncertainties in either the physical coefficients or the geometry of the system results in a multiparametric eigenvalue problem, see e.g. \cite{verhooselgutierrezhulshoff06}, \cite{meidanighanem14}, \cite{hakulalaaksonen15}, \cite{sousedikelman16}, \cite{hakulakaarniojalaaksonen18}, \cite{hakulalaaksonen19a}.

It is to be noted that multiparametric eigenvalue problems present some additional difficulties when compared to corresponding source problems. First of all, the eigenvalue problem introduces a product of unknowns and hence non-linearities arise. Second of all, one needs to pay special special attention to the selection of the right eigenmodes. For these reasons, techniques developed for the analysis and the numerical solution of the source problems are, in general, not directly applicable in the context of the eigenvalue problems. Nevertheless, in recent years several numerical methods have been suggested for solving multiparametric eigenvalue problems. The focus has been on spectral methods, which are based on polynomial approximations of the solution in the parameter domain and which have been shown to exhibit superior convergence rates compared to traditional Monte Carlo methods \cite{xiuhesthaven05}, \cite{schwabtodor07}, \cite{bieriandreevschwab09}, \cite{cohendevoreschwab10}. These typically take the form of stochastic collocation methods or the form of matrix iterations which rely on stochastic Galerkin approximation of the solution. A benchmark for the first class of methods is the sparse anisotropic collocation algorithm analyzed by Andreev and Schwab in \cite{andreevschwab12}. In the latter class of methods many different variants have been proposed over the years \cite{verhooselgutierrezhulshoff06}, \cite{meidanighanem14}, \cite{hakulakaarniojalaaksonen15}, \cite{hakulalaaksonen19b}. Quite recently, low-rank methods have also been introduced \cite{sirkovic16}, \cite{benneronwuntastoll18}, \cite{elmansu19}.

By their very nature, the spectral methods considered above rely on the assumption that the solution is smooth with respect to the input parameters. More precisely, these methods exhibit optimal rates of convergence only if the eigenpair of interest depends complex-analytically on the vector of parameters. This analytic dependence has been established for nondegenerate eigenvalues and associated eigenvectors in \cite{andreevschwab12}. For such eigenpairs we therefore have optimal rates of convergence for stochastic collocation algorithms, see \cite{andreevschwab12} for details, and optimal asymptotic rates of convergence for the iterative Galerkin based algorithms considered in \cite{hakulalaaksonen19b}. However, these results do not apply to cases where the eigenvalues are of higher multiplicity or where they are allowed to \textit{cross} within the parameter space. As noted in e.g. \cite{hakulalaaksonen19a}, many interesting engineering applications admit eigenvalues that are clustered close together and therefore the aforementioned eigenvalue crossings may not be avoided when these problems are cast into the parameter-dependent setting. 

% tracking of modes, mixing of modes, identifiable modes
In some special cases it is possible to identify the eigenmodes by some characteristic features such as Fourier indices. Then it may be possible to 
track eigenpairs through the parameter space by searching for the modes with the given indices even though the ordering of such modes becomes mixed over the parameter space. In other words, one well-defined basis for a given subspace is readily available. An example of such a problem can be constructed by extending the Dirichlet Laplacian on the unit square (Example~\ref{ex:model}).

\begin{exm}[Model problem]\label{ex:model}
Let us consider the Dirichlet Laplacian eigenproblem on the unit square. Note that this can also be seen as an example of the diffusion eigenproblem with a constant diffusion coefficient. It is well-known that the first four eigenpairs are 
\begin{align*}
(\lambda_1,u_1)&=(\lambda_{(1,1)},u_{(1,1)})=(2 \pi ^2, \sin (\pi  x_1) \sin (\pi  x_2)),\\
(\lambda_2,u_2)&=(\lambda_{(2,1)},u_{(2,1)})=(5 \pi ^2,\sin (2 \pi  x_1) \sin (\pi  x_2)),\\
(\lambda_3,u_3)&=(\lambda_{(1,2)},u_{(1,2)})=(5 \pi ^2,\sin (\pi  x_1) \sin (2 \pi x_2)),\\
(\lambda_4,u_4)&=(\lambda_{(2,2)},u_{(2,2)})=(8 \pi ^2,\sin (2 \pi  x_1) \sin (2 \pi  x_2)),
\end{align*}
where the eigenpairs are indexed also by the Fourier indices. The double eigenvalue is due to symmetry as shown in Figure~\ref{fig:squarevecs}. Here we are interested in the case where the diffusion coefficient is no longer constant but depends on a countable number of parameters. For example, we could think of a stochastic coefficient given in the form of a Karhunen-Lo\`eve expansion. If the variation in the diffusion is restricted to $x_1$-direction, say, it is intuitively clear that it follows within the cluster $(u_{(2,1)},u_{(1,2)})$ that $\lambda_{(2,1)} \neq \lambda_{(1,2)}$. Indeed, the relative order of the eigenvalues $\lambda_{(2,1)}$ and $\lambda_{(1,2)}$ ultimately depends on the realisations of the diffusion parameters. This is the mechanism that induces the crossing of eigenvalues in this context. Moreover, in the general setting it has to be established under what assumptions the cluster itself, i.e., the eigenspace associated to $\lambda_{(2,1)}$ and $\lambda_{(1,2)}$ remains isolated. 
\end{exm}

\begin{figure}[htb]
\begin{center}
\subfloat[{$u_1 = u_{(1,1)}$.}]
{\includegraphics[width=0.22\textwidth]{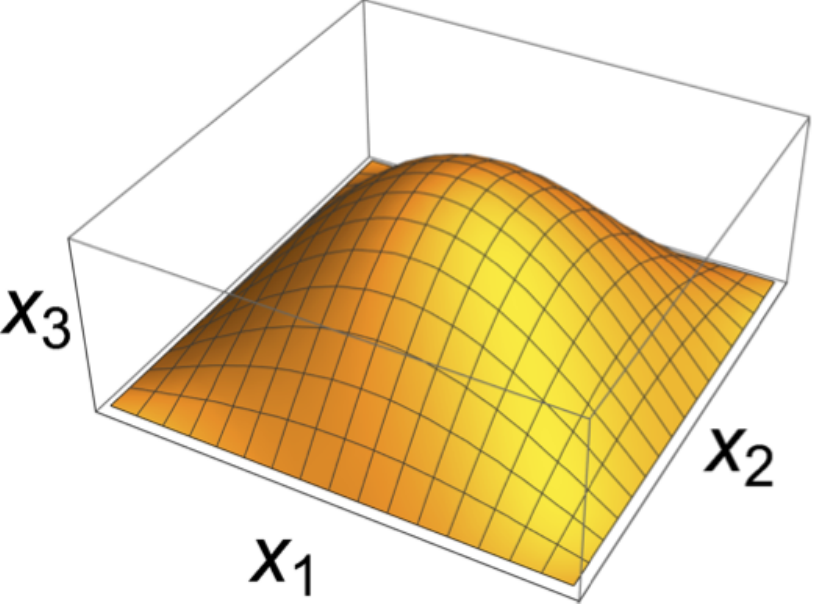}}\ 
\subfloat[{$u_2 = u_{(2,1)}$.}]
{\includegraphics[width=0.22\textwidth]{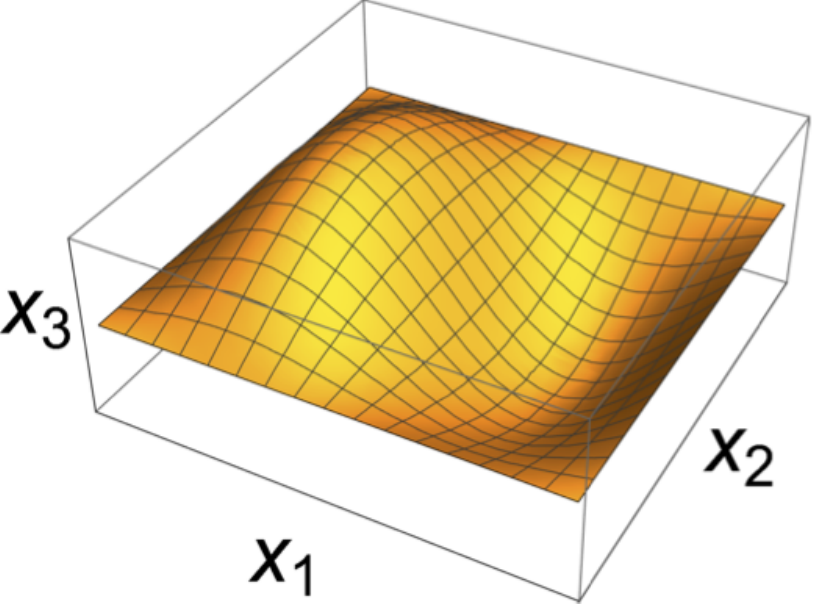}}\ 
\subfloat[{$u_3 = u_{(1,2)}$.}]
{\includegraphics[width=0.22\textwidth]{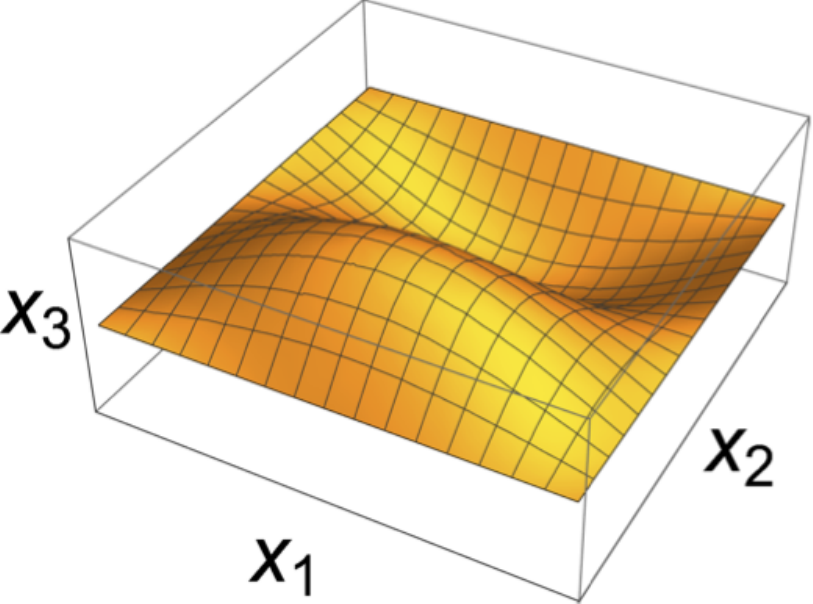}}\ 
\subfloat[{$u_4 = u_{(2,2)}$.}]
{\includegraphics[width=0.22\textwidth]{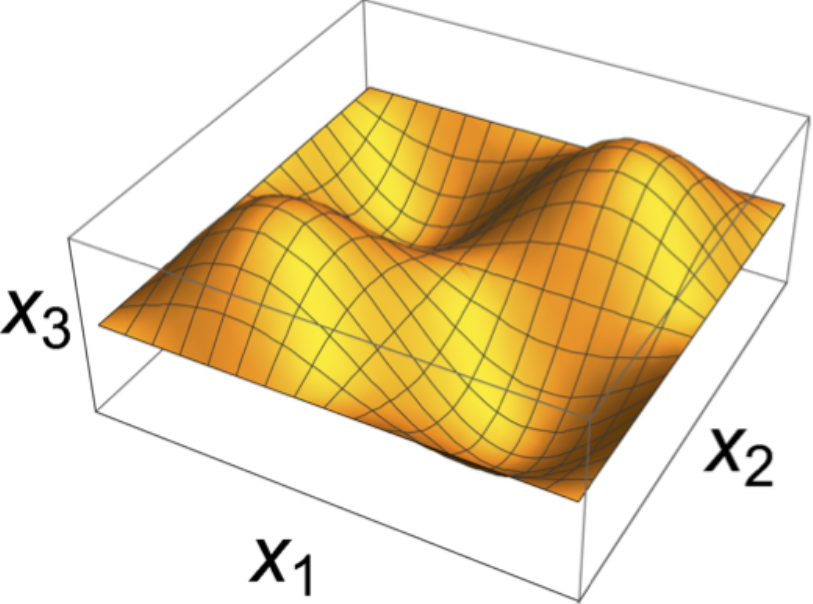}}
\caption{Dirichlet Laplacian in the unit square: First four eigenfunctions.}
\label{fig:squarevecs}
\end{center}
\end{figure}

In this paper we consider eigenspaces of an elliptic self-adjoint operator that depends affinely on a countable number of parameters. Our main theoretical contribution is that we extend the results in \cite{andreevschwab12} on analyticity to cover eigenspaces associated to possibly clustered eigenvalues. The underlying assumption is that the eigenspace of interest is isolated in the sense that the corresponding eigenvalues are separated from the rest of the spectrum for all values of the input parameters. We show that the spectral projection operator associated to such an isolated eigenspace can in fact be extended to a complex-analytic function of the input parameters. This allows us to construct a well-defined and smooth basis for the eigenspace of interest and show that optimal convergence rates hold when the basis vectors are approximated using a conveniently chosen set of orthogonal polynomials. We consider the stochastic collocation method defined on an anisotropic sparse grid in the parameter domain, similar to the one in \cite{andreevschwab12}, for computing a basis for the eigenspace of interest. Our numerical experiments show that optimal rates of convergence hold even in the presence of eigenvalue crossings. In fact, in our examples we observe fast rates of convergence even if the terms in the Karhunen- Lo\`eve series decay too slowly for the current theory to hold.

Our method constructs a basis for the eigenspace. This can be useful for at least two reasons. First of all, once the basis has been computed, we may project our original eigenproblem on this basis. It then becomes easier to track the individual eigenpairs as we no longer need to deal with the original full eigenvalue problem. Instead, the actual eigeninformation, values and modes, could be recovered, for instance, by sampling realisations of the eigenmodes of the projected problem. Second of all, in important applications such as frequency response analysis, finding a representation for the eigenspace may be of independent interest, and here it is obtained directly.

The rest of this paper is structured as follows: In Section~\ref{sec:problem}
the model problem is defined in its multiparametric form, the main result, analyticity of isolated eigenspaces is established in Section~\ref{sec:analyticity},
the collocation scheme is defined in Section~\ref{sec:collocation}, and
the numerical experiments in Section~\ref{sec:numerical}, before concluding remarks in Section~\ref{sec:conclusions}.

\section{Problem formulation}
\label{sec:problem}

We consider a class of self-adjoint operators that depend affinely on a countable number of real parameters. This affine dependence is often of independent interest but may also result from first order approximation of more general smooth dependence. In particular, the commonly used model problem for a stochastic diffusion operator falls within our framework.

\subsection{Multiparametric variational eigenvalue problems}

Let $V$ and $H$ be separable Hilbert spaces over $\R$ and denote the associated inner products by $(\cdot, \cdot)_V$ and $(\cdot, \cdot)_H$ and norms by $\norm{\ \cdot \ }{V}$ and $\norm{\ \cdot \ }{H}$. Assume that $V$ and $H$ form the so-called Gel'fand triple $V \subset H \subset V^*$ with dense and compact embeddings. We denote by $\mc{L}(V,V^*)$ the space of bounded linear operators from $V$ to its dual $V^*$. Furthermore, we denote by $\langle \cdot, \cdot \rangle_{V \times V^*}$ the duality pairing on $V$ and $V^*$, which may be interpreted as an extension of the inner product $(\cdot, \cdot)_H$.

For each $m \in \N_0$ let $b_m \! : V \times V \to \R$ be a symmetric and continuous bilinear form, which we can associate with an operator $B_m \in \mc{L}(V,V^*)$ using
\[
b_m(u,v) = \langle v, B_m u \rangle_{V \times V^*} \quad \forall u,v \in V.
\]
Suppose that there exists $\alpha_0 > 0$ such that
\begin{equation}
\label{eq:ellipticity}
b_0(v,v) \ge \alpha_0 \norm{v}V^2 \quad \forall v \in V
\end{equation}
and a sequence $\kappa = (\kappa_1, \kappa_2, \ldots)$ of positive real numbers such that $\norm{\kappa}{\ell^1(\N)} < 1$ and
\begin{equation}
\label{eq:decay}
|b_m(u,v)| \le \kappa_m \alpha_0 \norm{u}{V} \norm{v}{V} \quad \forall u,v \in V.
\end{equation}
We define a multiparametric bilinear form
\begin{equation}
\label{eq:bilinearform}
b(y;u,v) := b_0(u,v) + \sum_{m=1}^{\infty} y_m b_m(u,v), \quad u, v \in V,
\end{equation}
where $y = (y_1, y_2, \ldots)$ is a vector of parameters, each of which takes values in a closed interval of $\R$. Equivalently, we can call \eqref{eq:bilinearform} a multiparametric family of bilinear forms. Without loss of generality we may assume a scaling such that $y \in \Gamma := [-1,1]^{\infty}$. We associate the form \eqref{eq:bilinearform} with a multiparametric family of operators $B \! : \Gamma \to \mc{L}(V,V^*)$ given by
\begin{equation}
\label{eq:mpoperator}
B(y) := B_0 + \sum_{m=1}^{\infty} y_m B_m.
\end{equation}

\begin{rmk}
The ellipticity condition \eqref{eq:ellipticity} could be weakened by assuming that
\[
b_0(v,v) + w \norm{v}{H}^2 \ge \alpha_0 \norm{v}{V}^2 \quad \forall v \in V
\]
for some $w > 0$ and $\alpha_0 > 0$. This can be reduced to the elliptic case using a standard shift procedure.
\end{rmk}

The assumptions above imply that $b(y;\cdot,\cdot)$ is uniformly bounded and uniformly elliptic, i.e.,
\[
\sup_{y \in \Gamma} |b(y;u,v)| \le C\norm{u}V\norm{v}V \quad \forall u,v \in V
\]
and
\[
\inf_{y \in \Gamma} b(y;v,v) \ge \alpha \norm{v}V^2 \quad \forall v \in V
\]
for some $C > 0$ and $\alpha > 0$. Consider the following multiparametric eigenvalue problem: find $\mu \! : \Gamma \to \R$ and $u \! : \Gamma \to V \backslash \{ 0 \}$ such that
\begin{equation}
\label{eq:pevp}
B(y)u(y) = \mu(y)u(y),
\end{equation}
or in variational form
\begin{equation}
\label{eq:pevpvarform}
b(y;u(y),v) = \mu(y) (u(y),v)_H \quad \forall v \in V.
\end{equation}
The Lax-Milgram lemma guarantees that for any $y \in \Gamma$ the operator $B(y)$ is boundedly invertible and its inverse $B^{-1}(y) \! : H \to V$ is compact due to the compact embedding $V \subset H$. Therefore, the problem admits a countable number of real eigenvalues of finite multiplicity and associated eigenfunctions that form an orthogonal basis of $H$.

\begin{rmk}
\label{rmk:modelproblem}
A commonly used model problem is the stochastic diffusion eigenvalue problem on $D \subset \R^n$
\begin{equation}
\label{eq:modelproblem}
\left\{ \begin{array}{ll} - \nabla \cdot (a(\cdot, y) \nabla u(\cdot,y)) = \mu(y) u(\cdot, y) & \textrm{in }D \\
u(\cdot, y) = 0 & \textrm{on }\partial D, \end{array} \right.
\end{equation}
where the diffusion coefficient is a random field expressed in its Karhunen-Lo\`eve expansion
\begin{equation}
\label{eq:klexpansion}
a(x,y) = a_0(x) + \sum_{m=1}^{\infty} a_m(x) y_m, \quad x \in D, \quad y \in \Gamma.
\end{equation}
Indeed (if $D$ is nice enough) the variational formulation of \eqref{eq:modelproblem} is given by \eqref{eq:pevpvarform} with the choice $V = H^1_0(D)$, $H = L^2(D)$ and
\[
b_m(u,v) = \int_D a_m \nabla u \cdot \nabla v \ dx \quad \forall u,v \in V, \quad m \in \N_0.
\]
Let $C_D$ denote the Poincar\'e constant for $D$. It is now easy to see that the inequalities \eqref{eq:ellipticity} and \eqref{eq:decay} are satisfied if $\alpha_0 := (1 + C_D)^{-1} \essinf_{x \in D} a_0(x) > 0$ and $\kappa_m := \alpha_0^{-1}\norm{a_m}{L^{\infty}(D)}$ are such that $\norm{\kappa}{\ell^1(\N)} < 1$.
\end{rmk}

We will assume an increasing enumeration of the eigenvalues so that
\[
0 < \mu_1(y) \le \mu_2(y) \le \ldots \quad \forall y \in \Gamma,
\]
where each eigenvalue may be listed several times according to its multiplicity. We denote by $\{ u_i(y) \}_{i=1}^{\infty}$ a set of associated eigenfunctions which are orthonormal in $H$ for every $y \in \Gamma$. Ultimately we would like to compute any given subset of the eigenpairs $\{ (\mu_i, u_i) \}_{i=1}^{\infty}$ of problem \eqref{eq:pevp}. However, due to possible eigenvalue crossings, this may sometimes be an extremely difficult task to perform computationally, see e.g. \cite{hakulalaaksonen19a}, \cite{hakulalaaksonen19b}. Therefore, we will work under the assumption that the eigenspace of interest is isolated, i.e., the associated eigenvalues are strictly separated from the rest of the spectrum.

\subsection{Isolated eigenspaces}

Let $J \subset \N$ and $S = \#J$ denote its cardinality. For $y \in \Gamma$ let $\sigma_J(y) := \{ \mu_i(y) \}_{i \in J}$ denote a set of eigenvalues of the problem \eqref{eq:pevp} and $U_J(y) := \spa \{u_i(y)\}_{i \in J}$ denote the associated eigenspace. We use a shorthand notation $U_S$ for the eigenspace $U_J$ with $J = \{1,2, \ldots, S\}$. We call an eigenspace $U_J$ isolated with parameter $\delta > 0$ (or simply just isolated) if
\[
\dist(\sigma_J(y), \sigma_{\N \backslash J}(y)) \ge \delta \max \sigma_J(y) \quad \forall y \in \Gamma.
\]
A set of functions $\{g_i\}_{i=1}^S \subset V^{\Gamma}$ is called a basis of $U_J$ if
\[
U_J(y) = \spa \{g_i(y)\}_{i=1}^S \quad \forall y \in \Gamma.
\]
Moreover, this basis is called orthonormal if $\{g_i(y)\}_{i=1}^S$ is orthonormal in $H$ for every $y \in \Gamma$. In the context of this paper we are interested in computing a basis for a given isolated eigenspace $U_J$. We aim to demonstrate that, though the set of eigenvectors $\{u_i\}_{j \in J}$ clearly is an orthonormal basis of $U_J$, it may not always be computationally the most accessible one.

\begin{rmk}
Note that even if the eigenspace $U_J$ is isolated, double eigenvalues or eigenvalue crossings may still exist within the set $\{\mu_i\}_{i \in J}$. In other words, we might have $\mu_i(y) = \mu_j(y)$ and $i \not= j$ for some $i, j \in J$ and $y \in \Gamma$.
\end{rmk}

The following is an adaptation of the classical theorem by Weyl.

\begin{prp}
\label{prp:weyl}
Under assumptions \eqref{eq:ellipticity} and \eqref{eq:decay} the eigenvalues of the problem \eqref{eq:pevp} satisfy
\[
\left( 1 - \norm{\kappa}{\ell^1(\N)} \right) \mu_i(0) \le \mu_i(y) \le \left( 1 + \norm{\kappa}{\ell^1(\N)} \right) \mu_i(0), \quad i \in \N, \quad y \in \Gamma.
\]
\end{prp}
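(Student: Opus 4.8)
The plan is to exploit the min-max (Courant--Fischer) characterization of eigenvalues together with the uniform bounds on the parametric form. For a fixed $y\in\Gamma$, the $i$-th eigenvalue of \eqref{eq:pevp} is
\[
\mu_i(y) = \min_{\substack{W \subset V\\ \dim W = i}} \ \max_{\substack{v \in W\\ v \neq 0}} \ \frac{b(y;v,v)}{(v,v)_H},
\]
and likewise $\mu_i(0)$ is obtained with $b(0;\cdot,\cdot) = b_0(\cdot,\cdot)$ in the numerator. So the entire statement reduces to comparing the Rayleigh quotients $b(y;v,v)/(v,v)_H$ and $b_0(v,v)/(v,v)_H$ pointwise in $v$.

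First I would establish the two-sided pointwise bound on the form: for every $v\in V$ and every $y\in\Gamma$,
\[
\bigl(1 - \norm{\kappa}{\ell^1(\N)}\bigr) b_0(v,v) \ \le\ b(y;v,v) \ \le\ \bigl(1 + \norm{\kappa}{\ell^1(\N)}\bigr) b_0(v,v).
\]
This follows by writing $b(y;v,v) = b_0(v,v) + \sum_{m\ge 1} y_m b_m(v,v)$, bounding $|y_m|\le 1$, applying \eqref{eq:decay} to get $|b_m(v,v)| \le \kappa_m \alpha_0 \norm{v}{V}^2$, summing to obtain $\bigl|\sum_{m\ge1} y_m b_m(v,v)\bigr| \le \norm{\kappa}{\ell^1(\N)}\,\alpha_0\norm{v}{V}^2$, and finally using the ellipticity \eqref{eq:ellipticity} in the form $\alpha_0\norm{v}{V}^2 \le b_0(v,v)$ to replace the $V$-norm term by $b_0(v,v)$. (This last substitution is exactly the point flagged in the commented-out remark about taking $\norm{\cdot}{V} = \sqrt{b_0(\cdot,\cdot)}$; it is what makes the constants come out clean.)

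Dividing through by $(v,v)_H > 0$ gives the same two-sided inequality for the Rayleigh quotients. Now I would insert this into the min-max formula: since for every subspace $W$ of dimension $i$ the inner maximum over $W$ of the $b(y;\cdot,\cdot)$-quotient is bounded above by $(1+\norm{\kappa}{\ell^1(\N)})$ times the inner maximum of the $b_0$-quotient, taking the minimum over all such $W$ yields $\mu_i(y) \le (1+\norm{\kappa}{\ell^1(\N)})\,\mu_i(0)$; the lower bound follows symmetrically. There is no real obstacle here — the only points requiring a line of care are the absolute convergence of the series $\sum_m y_m b_m(v,v)$ (guaranteed by $\norm{\kappa}{\ell^1(\N)} < 1$, which also makes $b(y;\cdot,\cdot)$ well-defined), and the observation that the positivity of the spectrum ($\mu_i(0) > 0$, itself a consequence of ellipticity and the compact embedding) keeps the inequalities in the stated direction. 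I would remark in passing that the argument is purely variational and does not use self-adjointness beyond what is already built into the min-max principle.
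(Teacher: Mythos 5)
Your proposal is correct and follows essentially the same route as the paper: both rest on the min--max characterization combined with the pointwise two-sided bound $\bigl(1-\norm{\kappa}{\ell^1(\N)}\bigr)b_0(v,v)\le b(y;v,v)\le\bigl(1+\norm{\kappa}{\ell^1(\N)}\bigr)b_0(v,v)$, which is obtained exactly as you describe from \eqref{eq:decay} and \eqref{eq:ellipticity}. The only cosmetic difference is that the paper instantiates the specific eigenspaces $U_i(y)$ and $U_i(0)$ as test subspaces, whereas you compare the Rayleigh quotients uniformly over all $i$-dimensional subspaces before taking the min--max; both are valid and yield the same constants.
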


\begin{proof}
Recall the min-max characterization of eigenvalues. For $i \in \N$ let $V^{(i)}$ denote the set of all subspaces of $V$ with dimension equal to $i$. Given a subspace $U \subset V$ we set $\widehat{U} = \{ v \in U \ | \ \norm{v}{H} = 1 \}$. For some $u \in \widehat{U}_i(y)$ we now have
\[
\mu_i(0) = \min_{U \in V^{(i)}} \max_{v \in \widehat{U}} \ b_0(v,v) \le \max_{v \in \widehat{U}_i(y)} b_0(v,v) = b_0(u,u)
\]
and
\[
\mu_i(y) = \min_{U \in V^{(i)}} \max_{v \in \widehat{U}} \ b(y;v,v) = \max_{v \in \widehat{U}_i(y)} b(y;v,v) \ge b(y;u,u).
\]
It follows that
\[
\mu_i(y) \ge b(y;u,u) \ge \left( 1 - \norm{\kappa}{\ell^1(\N)} \right) b_0(u,u) \ge \left( 1 - \norm{\kappa}{\ell^1(\N)} \right) \mu_i(0).
\]
Similarly for some $u \in U_i(0)$ we have
\[
\mu_i(y) = \min_{U \in V^{(i)}} \max_{v \in \widehat{U}} \ b(y;v,v) \le \max_{v \in \widehat{U}_i(0)} b(y;v,v) = b(y;u,u)
\]
and
\[
\mu_i(0) = \min_{U \in V^{(i)}} \max_{v \in \widehat{U}} \ b_0(v,v) = \max_{v \in \widehat{U}_i(0)} b_0(v,v) \ge b_0(u,u)
\]
so that
\[
\mu_i(y) \le b(y;u,u) \le \left( 1 + \norm{\kappa}{\ell^1(\N)} \right) b_0(u,u) \le \left( 1 + \norm{\kappa}{\ell^1(\N)} \right) \mu_i(0).
\]
\end{proof}

As a corollary we obtain sufficient criteria for an eigenspace to be isolated. For simplicity we state these only in the case of an eigenspace $U_S$ with $S \in \N$.

\begin{cor}
\label{cor:separated}
Assume \eqref{eq:ellipticity} and \eqref{eq:decay}. Given $S \in \N$ let
\begin{equation}
\label{eq:separationcriterion}
\delta_0 := \frac{\mu_{S+1}(0) - \mu_S(0)}{\mu_S(0)} > \frac{2}{\norm{\kappa}{\ell^1(\N)}^{-1}-1}.
\end{equation}
Then the eigenspace $U_S$ of the problem \eqref{eq:pevp} is isolated with parameter
\[
\delta = \delta(\delta_0, \kappa) := \frac{\delta_0 - (\delta_0+2)\norm{\kappa}{\ell^1(\N)}}{1 + \norm{\kappa}{\ell^1(\N)}} > 0.
\]
\end{cor}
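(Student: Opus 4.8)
The plan is to reduce everything to Proposition~\ref{prp:weyl} together with the increasing enumeration of the eigenvalues. Since $J = \{1,2,\ldots,S\}$, the ordering $0 < \mu_1(y) \le \mu_2(y) \le \ldots$ gives $\max \sigma_J(y) = \mu_S(y)$ and $\sigma_{\N \backslash J}(y) = \{\mu_i(y)\}_{i \ge S+1}$, so that $\dist(\sigma_J(y), \sigma_{\N \backslash J}(y)) = \mu_{S+1}(y) - \mu_S(y)$. Hence the claim $\dist(\sigma_J(y), \sigma_{\N \backslash J}(y)) \ge \delta \max \sigma_J(y)$ is equivalent to the single scalar inequality $\mu_{S+1}(y) - \mu_S(y) \ge \delta\, \mu_S(y)$ for all $y \in \Gamma$.

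Next I would apply the two-sided bound of Proposition~\ref{prp:weyl} with $i = S+1$ from below and $i = S$ from above. Writing $t := \norm{\kappa}{\ell^1(\N)} < 1$ for brevity and using $\mu_{S+1}(0) = (1+\delta_0)\mu_S(0)$, this yields
\[
\mu_{S+1}(y) - \mu_S(y) \ge (1-t)\mu_{S+1}(0) - (1+t)\mu_S(0) = \bigl( \delta_0 - (\delta_0 + 2)t \bigr)\mu_S(0).
\]
The hypothesis $\delta_0 > 2/(t^{-1}-1) = 2t/(1-t)$ is exactly the statement that $\delta_0 - (\delta_0+2)t > 0$, so the right-hand side is nonnegative. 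At this point I would use the upper Weyl bound once more, in the form $\mu_S(0) \ge \mu_S(y)/(1+t)$, which is legitimate precisely because the coefficient $\delta_0 - (\delta_0+2)t$ multiplying it is positive; this gives
\[
\mu_{S+1}(y) - \mu_S(y) \ge \frac{\delta_0 - (\delta_0+2)t}{1+t}\,\mu_S(y) = \delta\, \mu_S(y),
\]
which is the desired inequality, and $\delta > 0$ follows from the same positivity.

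There is no real obstacle here beyond bookkeeping: the only point requiring care is the order in which the Weyl estimates are invoked — one must first extract the factor $\mu_S(0)$, verify its coefficient is positive using the assumption on $\delta_0$, and only then trade $\mu_S(0)$ for $\mu_S(y)/(1+t)$, since reversing these steps would spoil the sign. I would also remark that the same argument applies verbatim to a general isolated block $U_J$ provided $J$ is a set of \emph{consecutive} indices, with $\mu_S(0)$ and $\mu_{S+1}(0)$ replaced by the appropriate endpoints of the cluster; the restriction to $U_S$ in the statement is only for notational simplicity.
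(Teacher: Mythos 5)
Your proof is correct and follows essentially the same route as the paper: both apply Proposition~\ref{prp:weyl} to bound $\mu_{S+1}(y)-\mu_S(y)$ from below by $\bigl(\delta_0-(\delta_0+2)\norm{\kappa}{\ell^1(\N)}\bigr)\mu_S(0)=\delta\bigl(1+\norm{\kappa}{\ell^1(\N)}\bigr)\mu_S(0)$ and then trade $\mu_S(0)$ for $\mu_S(y)/(1+\norm{\kappa}{\ell^1(\N)})$ using the positivity of the coefficient. Your additional remarks on the order of the estimates and on consecutive index blocks are accurate but not needed beyond what the paper records.
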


\begin{proof}
Clearly $\delta > 0$. By Proposition \ref{prp:weyl} we have
\begin{align*}
\mu_{S+1}(y) - \mu_S(y) & \ge \left(1 - \norm{\kappa}{\ell^1(\N)} \right)\mu_{S+1}(0) - \left(1 + \norm{\kappa}{\ell^1(\N)} \right)\mu_S(0) \\
& = \delta \left( 1 + \norm{\kappa}{\ell^1(\N)} \right) \mu_S(0) \\
& \ge \delta \mu_S(y)
\end{align*}
for all $y \in \Gamma$.
\end{proof}

\subsection{Canonical bases}
\label{cb}

Given a set $J \subset \N$ with cardinality $S$, we define a canonical basis $\{ \hat{u}_i \}_{i=1}^S$ for the eigenspace $U_J$ by setting
\[
\hat{u}_i(y) = \sum_{j \in J} (u_{J(i)}(0),u_j(y))_H \ \! u_j(y) \quad \forall y \in \Gamma.
\]
Here $J(i)$ denotes the $i$th element in any fixed permutation of $J$. Observe that the canonical basis vectors $\{ \hat{u}_i \}_{i=1}^S$ now only depend on the eigenspace $U_J$ and not on the choice of the individual eigenvectors $\{ u_i \}_{i \in J}$. Moreover, if the matrix $\{ (u_{J(i)}(0),u_{J(j)}(y))_H \}_{i,j=1}^S$ is nonsingular, then $\{ \hat{u}_i \}_{i=1}^S$ is in fact a basis for $U_J$. Note that $\hat{u}_i(y)$ need not be orthonormal for $y\ne 0$ and that the inverse of the lowermost singular value of the Gram matrix $\{ (u_{J(i)}(0),u_{J(j)}(y))_H \}_{i,j=1}^S$ denotes the condition number of the basis and is uniformly bounded away from infinity due to the spectral separation assumption as given by the standard results for the convergence radii for the perturbation expansions of spectral projections from \cite{kato76}.

\section{Analyticity of isolated eigenspaces}
\label{sec:analyticity}

Next we will prove that any isolated eigenspace is in fact analytic with respect to the parameter vector $y \in \Gamma$ in a suitable sense. To this end we extend our analysis for complex valued arguments: In this section we assume that $V$ and $H$ are separable Hilbert spaces over $\C$ and extend the inner products $(\cdot, \cdot)_V$ and $(\cdot, \cdot)_H$ as well as the duality pairing $\langle \cdot, \cdot \rangle_{V \times V^*}$ for complex-valued arguments sesquilinearly. Now \eqref{eq:mpoperator} can be treated as the restriction to $\Gamma$ of the operator-valued function
\[
B(z) = B_0 + \sum_{m=1}^{\infty} z_m B_m, \quad z \in \C^{\infty}.
\]
We equip $\Gamma \subset \C^{\infty}$ with the Hausdorff topology so that this fits the framework of \cite{herve89}.

\subsection{Riesz spectral projection}

For $z \in \C^{\infty}$ let $\Omega(z)$ be a closed curve in the complex plane, which encloses a set of eigenvalues of $B(z)$, denoted by $\sigma_J(z)$, but no other elements in the spectrum of $B(z)$. We define the spectral projection
\[
P_J(z) = \frac{1}{2 \pi \iu} \int_{\Omega(z)} (\omega - B(z))^{-1} d \omega.
\]
We call the mapping $z \mapsto U_J(z)$ analytic if $z \mapsto P_J(z)$ is analytic, i.e., the mapping $z \mapsto (P_J(z)v,u)_H$ is analytic for all $v,u \in V$. Note that $z \mapsto (P_J(z)v,u)_H$ is a standard complex function of a complex variable. 

The canonical basis from section \ref{cb} can now be expressed as
$$
\hat{u}_i(y)=P_J(y)u_{J(i)}(0).
$$

\subsection{Analyticity in one parameter}

We first restrict our analysis to operators depending on a single parameter. In other words we consider the eigenvalues of \eqref{eq:pevp} when $y \in \Gamma$ is replaced by $t \in [-1,1]$ and our operator thus takes the form
\begin{equation}
\label{eq:singleparameteroperator}
B(t) = B_0 + t B_1, \quad t \in [-1,1].
\end{equation}
Here \eqref{eq:singleparameteroperator} will be understood as the restriction to $[-1,1]$ of the operator-valued function
\[
B(z) = B_0 + z B_1, \quad z \in \C.
\] 
The assumptions \eqref{eq:ellipticity} and \eqref{eq:decay} now imply
\begin{equation}
\label{eq:coercive}
\langle v, B_0 v \rangle_{V \times V^*} \ge \alpha_0 \norm{v}{V}^2, \quad \forall v \in V
\end{equation}
and
\begin{equation}
\label{eq:bounded}
\norm{B_1}{\mc{L}(V,V^*)} \le \kappa_1 \alpha_0
\end{equation}
for some $\alpha_0 > 0$ and $0 < \kappa_1 < 1$. We obtain the following result.

\begin{prp}
\label{prp:1danalyticity}
Consider the problem \eqref{eq:pevp} with $B_m = 0$ for $m \ge 2$, i.e., $B \! : [-1,1] \to \mc{L}(V,V^*)$ is of the form \eqref{eq:singleparameteroperator} and satisifies \eqref{eq:coercive} and \eqref{eq:bounded}. Given a finite $J \subset \N$ assume that the eigenspace $t \to U_J(t)$ is isolated with parameter $\delta > 0$ for $t \in [-1,1]$. Then it admits a complex-analytic extension $z \to U_J(z)$ to the region
\[
E(r) := \{ z \in \C \ | \ \exists t \in [-1,1] \ \mathrm{s.t.} \ |z-t| < r(t) \},
\]
where
\[
r(t) := \frac{\kappa_1^{-1}-|t|}{2(1+\delta^{-1})}.
\]
Moreover, for every $z \in E(r)$ the spectrum of $B(z)$ is separated into two parts $\sigma_J(z)$ and $\sigma_{\N \backslash J}(z)$ such that $\dist(\sigma_J(z), \sigma_{\N \backslash J}(z)) > 0$. 
\end{prp}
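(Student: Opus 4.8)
The plan is to argue locally in the parameter: fix a base point $t_0\in[-1,1]$, write $B(z)=B(t_0)+(z-t_0)B_1$, extend the Riesz projection of the cluster $\sigma_J(t_0)$ to a complex disc around $t_0$ by a Neumann series, and glue these local extensions into a single analytic function on $E(r)$. The quantitative core will be a resolvent bound on a contour separating $\sigma_J(t_0)$ from the rest of the spectrum, together with a choice of that contour engineered so that the disc of validity has radius exactly $r(t_0)$.

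First, for fixed $t_0\in[-1,1]$ I would use that $B(t_0)$ is self-adjoint, with $H$-orthonormal eigenbasis $\{u_i(t_0)\}$ and eigenvalues $\{\mu_i(t_0)\}$. Equipping $V$ with the equivalent norm $\norm{v}{t_0}:=b(t_0;v,v)^{1/2}$ and expanding along this eigenbasis, a direct computation gives, for every $\omega\in\C$ with $\omega\ne\mu_i(t_0)$ for all $i$,
\[
\norm{(\omega-B(t_0))^{-1}}{\mc{L}(V^*_{t_0},V_{t_0})}=\sup_{i\in\N}\frac{\mu_i(t_0)}{|\mu_i(t_0)-\omega|},
\]
the resolvent being read as a map $V^*\to V$ via $V\subset H\subset V^*$. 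From \eqref{eq:bounded} and \eqref{eq:coercive} — which give in particular $b(t_0;v,v)\ge(1-|t_0|\kappa_1)b_0(v,v)$ — one also gets $\norm{B_1}{\mc{L}(V_{t_0},V^*_{t_0})}\le\kappa_1/(1-|t_0|\kappa_1)$, the constant $\alpha_0$ cancelling. Since $\omega-B(z)=(\omega-B(t_0))[\,I+(t_0-z)(\omega-B(t_0))^{-1}B_1\,]$, a Neumann series argument then shows $\omega-B(z)$ is boundedly invertible whenever $|z-t_0|\,\kappa_1(1-|t_0|\kappa_1)^{-1}\sup_i(\mu_i(t_0)/|\mu_i(t_0)-\omega|)<1$.

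Next I would choose the contour. Put $m_0:=\min\sigma_J(t_0)$ and $M_0:=\max\sigma_J(t_0)$; the isolation hypothesis forces every eigenvalue of $B(t_0)$ outside $\sigma_J(t_0)$ to lie at $\ge(1+\delta)M_0$ or at $\le m_0-\delta M_0$. Let $\Omega_0$ be the circle centred on $\R$ with leftmost point $m_0-\tfrac{\delta}{2}M_0$ and rightmost point $(1+\tfrac{\delta}{2})M_0$. Then $\Omega_0$ encloses exactly $\sigma_J(t_0)$, satisfies $\dist(\sigma_J(t_0),\Omega_0)=\tfrac{\delta}{2}M_0$, and a short case check (the binding case being the eigenvalue just above the cluster) yields
\[
\sup_{\omega\in\Omega_0}\ \sup_{i\in\N}\frac{\mu_i(t_0)}{|\mu_i(t_0)-\omega|}\le 2(1+\delta^{-1}).
\]
Inserting this into the previous estimate, $\omega-B(z)$ is invertible for all $\omega\in\Omega_0$ as soon as $|z-t_0|<(1-|t_0|\kappa_1)/(2\kappa_1(1+\delta^{-1}))=(\kappa_1^{-1}-|t_0|)/(2(1+\delta^{-1}))=r(t_0)$, i.e. exactly the claimed radius.

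It remains to assemble the pieces. For $|z-t_0|<r(t_0)$ I would set $P_J(z):=\tfrac{1}{2\pi\iu}\int_{\Omega_0}(\omega-B(z))^{-1}\,d\omega$: the integrand is analytic in $z$ on this disc, so $P_J$ is; and since $\Omega_0$ meets no eigenvalue of $B(z)$ the spectrum splits into the part $\sigma_J(z)$ enclosed by $\Omega_0$ and the part $\sigma_{\N\backslash J}(z)$ outside, with $\dist(\sigma_J(z),\sigma_{\N\backslash J}(z))>0$; here $P_J(z)$ is the corresponding Riesz projection, of constant rank $S$ by continuity of the rank of an idempotent, and at real $t$ it reduces to the $H$-orthogonal projection onto $U_J(t)$. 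The discs $\{|z-t_0|<r(t_0)\}$, $t_0\in[-1,1]$, cover the connected set $E(r)$; because $s\mapsto r(s)-|z-s|$ is concave on $[-1,1]$, any $z$ lying in two of these discs lies in all intermediate ones, along which $\Omega_0$ can be deformed continuously without crossing the spectrum of $B(z)$, so the local projections agree on overlaps and assemble into one analytic $P_J$ on $E(r)$. I expect the real work to be the sharp $\mc{L}(V^*,V)$ resolvent estimate and the matching contour — lining up the constants so the admissible radius comes out exactly $r(t)$ — whereas the perturbation-theoretic content (analytic Riesz projections, spectral splitting) is classical, cf. \cite{kato76}.
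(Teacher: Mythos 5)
Your argument is, in substance, the paper's own: the same circular contour with margin proportional to $\delta\max\sigma_J(t_0)$, the same bound $2(1+\delta^{-1})$ for the weighted resolvent (your $\sup_i\mu_i(t_0)/|\mu_i(t_0)-\omega|$ is exactly the paper's $\norm{B(t)(B(t)-\omega)^{-1}}{\mc{L}(V^*,V^*)}$ computed in the energy norm), the same relative bound $\kappa_1(1-\kappa_1|t|)^{-1}$ for $B_1$, and hence the same radius $r(t)$. Where the paper invokes Remark VII.2.9 of \cite{kato76}, you unpack that remark into an explicit Neumann series for $\omega-B(z)=(\omega-B(t_0))[I+(t_0-z)(\omega-B(t_0))^{-1}B_1]$; your concavity-based consistency check for gluing the local extensions over $E(r)$ is a point the paper leaves implicit.

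There is one genuine gap: the case of non-consecutive $J$. Your claim that isolation forces every eigenvalue outside the cluster to lie at $\ge(1+\delta)M_0$ or at $\le m_0-\delta M_0$ presupposes that $\sigma_{\N\backslash J}(t_0)$ avoids the whole interval $[m_0,M_0]$, which is guaranteed only when $J$ indexes a consecutive run in the increasing enumeration. The hypothesis $\dist(\sigma_J,\sigma_{\N\backslash J})\ge\delta\max\sigma_J$ does not exclude, say for $J=\{1,3\}$ with a large internal gap, an eigenvalue $\mu_2\in(m_0,M_0)$ at distance $\ge\delta M_0$ from both $\mu_1$ and $\mu_3$; your circle $\Omega_0$ would then enclose it and $P_J(z)$ would project onto the wrong subspace. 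The paper handles this by partitioning $J$ into maximal runs of consecutive indices (each run inherits isolation with the same parameter $\delta$, since any two runs are separated by an intervening eigenvalue of $\sigma_{\N\backslash J}$), running the argument for each run with its own contour, and taking $P_J$ to be the sum of the resulting projections. With that one additional step your proof is complete.
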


\begin{proof}
Assume first that $J$ is a set of consecutive natural numbers. Let $t \in [-1,1]$ and denote $\gamma(t) := \dist(\sigma_J(t),\sigma_{\N \backslash J}(t)) > 0$. Let $\Omega(t)$ be the positively oriented circle of radius
\[
\rho(t) = \frac{1}{2}( \max \sigma_J(t) - \min \sigma_J(t)) + \frac{\gamma(t)}{2}
\]
centered at
\[
c(t) = \frac{1}{2} (\max \sigma_J(t) + \min \sigma_J(t)).
\]
Then $\Omega(t)$ encloses $\sigma_J(t)$ but no elements of $\sigma_{\N \backslash J}(t)$. Moreover, for every $\omega \in \Omega(t)$ we have
\begin{align*}
\norm{B(t) (B(t) - \omega)^{-1}}{\mc{L}(V,V^*)} & = \norm{\mathrm{id} + \omega (B(t) - \omega)^{-1}}{\mc{L}(V,V^*)} \\
& \le 1 + |\omega| \norm{(B(t) - \omega)^{-1}}{\mc{L}(V,V^*)} \\
& \le 1 + \left(\max \sigma_J(t) + \frac{\gamma(t)}{2} \right) \left( \frac{\gamma(t)}{2} \right)^{-1} \\
& = 2 \left(1 + \frac{\max \sigma_J(t)}{\gamma(t)} \right) \\
& \le 2 (1 + \delta^{-1}).
\end{align*}
Due to \eqref{eq:coercive} and \eqref{eq:bounded} we have
\[
\norm{B(t)v}{V^*} \ge \norm{B_0v}{V^*} - |t| \norm{B_1v}{V^*} \ge \alpha_0(1 - \kappa_1 |t|) \norm{v}{V}
\]
so that
\[
\norm{B_1v}{V^*} \le \kappa_1 \alpha_0 \norm{v}{V} \le \frac{\kappa_1}{1 - \kappa_1 |t|} \norm{B(t)v}{V^*}
\]
for all $v \in V$. By Remark VII.2.9 in \cite{kato76} there exists $r_0(t) > 0$ such that whenever $|z - t| < r_0(t)$ the spectrum of $B(z)$ is separated into two parts $\sigma_J(z)$ and $\sigma_{\N \backslash J}(z)$ by the curve $\Omega(t)$. Moreover, for such values of $z$ the spectral projection valued function $z \mapsto P_J(z)$ is complex-analytic. In fact we may set $a = c = 0$ and $b = \kappa_1(1 - \kappa_1 |t|)^{-1}$ in the definition of $r_0(t)$ and obtain
\begin{align*}
r_0(t) \ge \left( \frac{2(1+\delta^{-1}) \kappa_1}{1 - \kappa_1 |t|} \right)^{-1} = \frac{\kappa_1^{-1} - |t|}{2(1+\delta^{-1})}.
\end{align*}
Since $t \in [-1,1]$ was arbitrary we conclude that $z \to P_J(z)$ is complex-analytic in $E(r)$.

An arbitrary $J \subset \N$ may always be partitioned in such a way that each partition is a set of consecutive natural numbers. The previous proof applies for all partitions separately and thus the spectrum of $B(z)$ is separated for all $z \in E(r)$ and the total projection $z \mapsto P_J(z)$ is complex-analytic in $E(r)$.
\end{proof}

\subsection{Analyticity in a countable number of parameters}

We start with a simple Lemma that can be deduced from standard perturbation theory for analytic operators, see Chapter VII in \cite{kato76}.

\begin{lmm}
\label{lmm:separately}
Let $z \in \C^{\infty}$ and $J \subset \N$ be such that the spectrum of $B(z)$ can be separated into two parts $\sigma_J(z)$ and $\sigma_{J \backslash \N}(z)$ with $\dist(\sigma_J(z), \sigma_{J \backslash \N}(z)) > 0$. Let $m \in \N$ and $e_m$ denote the $m$:th unit vector in $\R^{\infty}$. Then there exists $\epsilon(z) > 0$ such that the eigenspace $\zeta \to U_J(z + e_m \zeta)$ is complex-analytic for all $\zeta \in \C$ such that $|\zeta| < \epsilon(z)$. 
\end{lmm}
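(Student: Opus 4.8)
The plan is to read the statement as a one-parameter analytic perturbation result along the coordinate direction $e_m$ and to reproduce, almost verbatim, the argument of Proposition~\ref{prp:1danalyticity} — now with $B(z)$ playing the role of the unperturbed operator $B_0$ and $\zeta B_m$ the perturbation. Indeed $B(z + e_m\zeta) = B(z) + \zeta B_m$ is an affine, hence entire, function of $\zeta \in \C$, and $B_m \in \mc{L}(V,V^*)$ is a fixed bounded operator with $\norm{B_m}{\mc{L}(V,V^*)} \le \kappa_m \alpha_0$, so this is a bounded (admissible) analytic perturbation of $B(z)$.

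First I would fix the gap $d := \dist(\sigma_J(z), \sigma_{\N\backslash J}(z)) > 0$ and choose a rectifiable closed curve, or finite union of such curves, $\Omega$ in $\C$ that encloses $\sigma_J(z)$, leaves $\sigma_{\N\backslash J}(z)$ in its exterior, and stays at distance at least $d/3$ from the whole spectrum $\sigma(B(z))$; such a curve exists because $\sigma_J(z)$ is a bounded — in the cases of interest, finite — part of the spectrum separated from the rest (e.g. take small circles of radius $<d/3$ about the points of $\sigma_J(z)$). Since $\Omega$ is compact and disjoint from $\sigma(B(z))$, the resolvent $\omega \mapsto (\omega - B(z))^{-1}$ is uniformly bounded on it. Then, for $\zeta$ small, I would invoke the stability of the separated spectrum under a bounded analytic perturbation — concretely Remark~VII.2.9 in \cite{kato76} applied to $\zeta \mapsto B(z) + \zeta B_m$, exactly as in the proof of Proposition~\ref{prp:1danalyticity}, or equivalently a Neumann series for $(\mathrm{id} - \zeta B_m(\omega - B(z))^{-1})^{-1}$ along $\Omega$. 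This produces $\epsilon(z) > 0$, which may be taken of order $\big(\norm{B_m}{\mc{L}(V,V^*)}\,\sup_{\omega\in\Omega}\norm{(\omega - B(z))^{-1}}{\mc{L}(V^*,V^*)}\big)^{-1}$ (and $\epsilon(z) = +\infty$ if $B_m = 0$), such that for every $|\zeta| < \epsilon(z)$ the curve $\Omega$ lies in the resolvent set of $B(z + e_m\zeta)$. Consequently $\Omega$ again separates $\sigma(B(z + e_m\zeta))$ into an interior part $\sigma_J(z + e_m\zeta)$ and an exterior part $\sigma_{\N\backslash J}(z + e_m\zeta)$ at positive distance from each other, and
\[
P_J(z + e_m\zeta) = \frac{1}{2\pi\iu}\int_\Omega (\omega - B(z + e_m\zeta))^{-1}\,d\omega
\]
is a norm-convergent integral over a fixed compact curve with integrand analytic in $\zeta$; passing to the scalar functions $\zeta \mapsto (P_J(z + e_m\zeta)v, u)$ for $u,v \in V$ and using the uniform resolvent bound to justify analyticity of the integral, one concludes that $\zeta \mapsto P_J(z + e_m\zeta)$, hence $\zeta \mapsto U_J(z + e_m\zeta)$, is complex-analytic on $|\zeta| < \epsilon(z)$ in the sense defined above.

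I do not expect a genuine obstacle here — as the authors note, the lemma is a direct consequence of standard analytic perturbation theory, and the substantive work has already been carried out in setting up the complex-analytic framework and in Proposition~\ref{prp:1danalyticity}. The one point that needs a word of care is that $\epsilon(z)$ must be chosen uniformly in $\omega \in \Omega$, which is precisely what compactness of $\Omega$ provides; if one prefers a bound not referring to the a priori unknown resolvent norm, $\epsilon(z)$ can instead be estimated from below in terms of $d$ and $\max\sigma_J(z)$ as in Proposition~\ref{prp:1danalyticity}, but this refinement is unnecessary since only $\epsilon(z) > 0$ is used in the sequel.
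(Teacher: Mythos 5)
Your proof is correct and follows exactly the route the paper intends: the paper gives no written proof of this lemma, simply deferring to standard analytic perturbation theory (Chapter VII of \cite{kato76}), and your argument --- a fixed contour around $\sigma_J(z)$ at positive distance from the spectrum, a uniform resolvent bound by compactness, a Neumann series (equivalently Kato's Remark VII.2.9) to keep the contour in the resolvent set for small $\zeta$, and analyticity of the resulting Riesz projection --- is precisely the standard argument being invoked, and mirrors the proof of Proposition~\ref{prp:1danalyticity}. No gaps; you also correctly read $\sigma_{J\backslash\N}$ as the typo for $\sigma_{\N\backslash J}$.
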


Suppose now that $\kappa \in \ell^p(\N)$ for some $p \in (0,1]$. Then we have the following result.

\begin{thm}
\label{thm:analyticity}
Consider the problem \eqref{eq:pevp} with assumptions \eqref{eq:ellipticity} and \eqref{eq:decay}. Assume that $\kappa \in \ell^p(\N)$ for some $p \in (0,1]$. Given a finite $J \subset \N$ assume that the eigenspace $y \to U_J(y)$ is isolated with parameter $\delta > 0$ for $y \in \Gamma$. Then it admits a complex-analytic extension $z \to U_J(z)$ in the region
\[
E(\tau) := \{ z \in \C^{\infty} \ | \ \dist(z_m, [-1,1]) < \tau_m \},
\]
where $\tau = (\tau_1, \tau_2, \ldots)$ is given by
\[
\tau_m := \frac{(1-\varepsilon)(1 - \norm{\kappa}{\ell^1(\N)}) \kappa_m^{p-1}}{2 \norm{\kappa}{\ell^p(\N)}(1+\delta^{-1})}, \quad m \in \N
\]
and $\varepsilon \in (0,1)$ is arbitrary.
\end{thm}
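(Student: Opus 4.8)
The plan is to exhibit $z \mapsto P_J(z)$ as a separately holomorphic and locally bounded map on the open connected set $E(\tau) \subset \C^\infty$ and then conclude analyticity from the theory of \cite{herve89}. Both ingredients reduce to a single fact: for every $z \in E(\tau)$ the spectrum of $B(z)$ splits, by a contour depending only on the real point nearest to $z$, into a part $\sigma_J(z)$ continuously connected to $\sigma_J$ and a complementary part, with positive distance between them and uniformly bounded resolvent on the separating contour. As preliminary reductions I would normalise $\norm{\cdot}{V}^2 = b_0(\cdot,\cdot)$ (so $\alpha_0 = 1$ and, by \eqref{eq:decay} with $\norm{\kappa}{\ell^1(\N)} < 1$, $\norm{B(t)v}{V^*} \ge (1 - \norm{\kappa}{\ell^1(\N)})\norm{v}{V}$ for every real $t \in \Gamma$); observe that $E(\tau) = \bigcup_{t \in \Gamma} \prod_m \{ z_m \in \C : |z_m - t_m| < \tau_m \}$ is path-connected, since each polydisc is connected and meets $\Gamma$; and, as in Proposition \ref{prp:1danalyticity}, treat first the case where $J$ is a block of consecutive indices, a general finite $J$ then being handled blockwise with $P_J$ the sum of the block projections (the gap bound $2(1+\delta^{-1})$ below being uniform over the blocks because indices of $\sigma_{\N\setminus J}$ adjacent to a block are separated from $\sigma_J$ by at least $\delta\max\sigma_J$).

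The heart of the matter is a Kato-type perturbation estimate based at the nearest real point, run exactly as in the proof of Proposition \ref{prp:1danalyticity} but with the whole parameter tail. Fix $z \in E(\tau)$, let $t \in \Gamma$ be the coordinatewise nearest point of $\Gamma$, so that $|z_m - t_m| = \dist(z_m, [-1,1]) < \tau_m$, and write $B(z) = B(t) + A$ with $A := \sum_{m=1}^\infty (z_m - t_m)B_m$. By \eqref{eq:decay}, $\norm{Av}{V^*} \le \bigl(\sum_m |z_m - t_m|\kappa_m\bigr)\norm{v}{V}$, and the point of the definition of $\tau_m$ is that it controls this whole sum: using $\sum_m \kappa_m^p = \norm{\kappa}{\ell^p(\N)}^p$,
\[
\sum_{m=1}^\infty |z_m - t_m|\kappa_m \;<\; \sum_{m=1}^\infty \tau_m \kappa_m \;=\; \frac{(1-\varepsilon)(1 - \norm{\kappa}{\ell^1(\N)})\,\norm{\kappa}{\ell^p(\N)}^{p-1}}{2(1+\delta^{-1})},
\]
so $A$ is relatively bounded with respect to $B(t)$ with relative bound $b_A := (1 - \norm{\kappa}{\ell^1(\N)})^{-1}\sum_m |z_m - t_m|\kappa_m$. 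Taking $\Omega(t)$ to be the circle isolating $\sigma_J(t)$ and using the isolation hypothesis to get $\norm{B(t)(B(t) - \omega)^{-1}}{\mc{L}(V^*,V^*)} \le 2(1 + \delta^{-1})$ on $\Omega(t)$ (uniformly in $t$), Remark VII.2.9 in \cite{kato76} with $a = c = 0$ shows that, provided $2(1+\delta^{-1})\,b_A < 1$ — which one checks from the displayed bound and the form of $\tau_m$, the factor $1 - \varepsilon$ supplying the strict inequality — the spectrum of $B(z)$ is separated by $\Omega(t)$ and $\omega \mapsto (\omega - B(z))^{-1}$ is bounded on $\Omega(t)$. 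Applying the same estimate on the polydisc $\{\,|\zeta_m - z_m| < \tau_m - |z_m - t_m|\,\}$ around $z$ shows that $z \mapsto P_J(z) = \frac{1}{2\pi\iu}\int_{\Omega(t)}(\omega - B(z))^{-1}\,d\omega$ is analytic and bounded there; in particular $P_J$ is well defined (contours for overlapping polydiscs enclose the same continuously varying part of the spectrum) and locally bounded on all of $E(\tau)$. Alternatively one can move one coordinate at a time, invoking Lemma \ref{lmm:separately} and tracking the erosion of the spectral gap; the factors $1 + \delta^{-1}$ and $1 - \varepsilon$ in $\tau_m$ are sized precisely so that the accumulated erosion over the infinitely many coordinates stays admissible.

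With spectral separation available at every $z \in E(\tau)$, Lemma \ref{lmm:separately} applies at each such $z$ and each $m$ and yields analyticity of $\zeta \mapsto (P_J(z + \zeta e_m)v, u)$ near $\zeta = 0$; since this holds at every point of the segment $\{\,\zeta : z + \zeta e_m \in E(\tau)\,\}$, the local pieces glue into an analytic function there, so $z \mapsto P_J(z)$ is holomorphic separately in each $z_m$ on $E(\tau)$. Being also locally bounded, it is analytic in the sense of \cite{herve89}, i.e.\ $z \mapsto (P_J(z)v, u)$ is analytic on $E(\tau)$ for all $u, v \in V$; hence $z \mapsto U_J(z)$ admits the claimed complex-analytic extension to $E(\tau)$.

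I expect the main obstacle to be the infinite-dimensional bookkeeping rather than any isolated inequality: one must verify that $A = \sum_m (z_m - t_m)B_m$ converges in $\mc{L}(V,V^*)$ and that the perturbation series underlying Remark VII.2.9 of \cite{kato76} converge uniformly in the parameter tail — this is exactly where $\norm{\kappa}{\ell^1(\N)} < 1$ (coercivity of $B(t)$ and convergence of $A$) and $\kappa \in \ell^p(\N)$ (summability of $\sum_m \tau_m\kappa_m$, which is what makes $\tau_m$ as large as claimed) are used — and one must invoke carefully that separate holomorphy together with local boundedness does imply analyticity in the sense of \cite{herve89} on $\C^\infty$ with the Hausdorff topology fixed earlier.
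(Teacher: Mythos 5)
Your argument is essentially the paper's: the paper likewise bases the estimate at a real point $y\in\Gamma$ with $|z_m-y_m|<\tau_m$, bounds $\norm{\sum_m(z_m-y_m)B_m}{\mc{L}(V,V^*)}$ by $\alpha_0\sum_m\tau_m\kappa_m$, runs the Kato Remark VII.2.9 computation with the gap constant $2(1+\delta^{-1})$ (packaged there as Proposition \ref{prp:1danalyticity} applied to the one-parameter family $t\mapsto B(y)+t\sum_m(z_m-y_m)B_m$), and concludes via Lemma \ref{lmm:separately} and the Hartogs/Herv\'e step exactly as you do, so the two proofs coincide in substance. The only point to tighten is the one your own display exposes: with $\tau_m$ as literally stated one gets $\sum_m\tau_m\kappa_m=\frac{(1-\varepsilon)(1-\norm{\kappa}{\ell^1(\N)})\norm{\kappa}{\ell^p(\N)}^{p-1}}{2(1+\delta^{-1})}$, and the required smallness $2(1+\delta^{-1})b_A<1$ is clean only if the stray factor $\norm{\kappa}{\ell^p(\N)}^{p-1}$ is at most one (not automatic for $p<1$ when $\norm{\kappa}{\ell^p(\N)}<1$); the paper's own evaluation of $\tilde{\kappa}$ tacitly normalizes $\tau_m$ by $\norm{\kappa}{\ell^p(\N)}^{p}$ rather than $\norm{\kappa}{\ell^p(\N)}$, so you should either adopt that normalization or add the verification of $\norm{\kappa}{\ell^p(\N)}^{p-1}\le 1$.
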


\begin{proof}
Let $z \in E(\tau)$ and take $y \in \Gamma$ such that $| z_m - y_m| < \tau_m$ for all $m \ge 1$. Denote $\zeta := z - y$. We now have
\[
\langle v, B(y) v \rangle_{V \times V^*} \ge \alpha_0 (1 - \norm{\kappa}{\ell^1(\N)}) \norm{v}{V}^2 \quad \forall v \in V
\]
and
\[
\norm{\sum_{m=1}^{\infty} \zeta_m B_m}{\mc{L}(V,V^*)} \le \sum_{m=1}^{\infty} \tau_m \norm{B_m}{\mc{L}(V,V^*)} \le \alpha_0 \sum_{m=1}^{\infty} \tau_m \kappa_m \le \alpha_0 (1 - \norm{\kappa}{\ell^1(\N)}) \tilde{\kappa},
\]
where
\[
\tilde{\kappa} := \sum_{m=1}^{\infty} \frac{\tau_m \kappa_m}{1 - \norm{\kappa}{\ell^1(\N)}} = \frac{1- \varepsilon}{2 (1+\delta^{-1})} < 1.
\]
Proposition \ref{prp:1danalyticity} now applies for the shifted operator
\[
t \mapsto B(y + t \zeta) = B(y) + t \sum_{m=1}^{\infty} \zeta_m B_m
\]
and therefore the associated eigenspace $t \mapsto U_J(y + t\zeta)$ can be extended to a function $\tilde{z} \mapsto U_J(y + \tilde{z} \zeta)$ which is complex-analytic for all $\tilde{z} \in \C$ such that
\[
|\tilde{z}| < \frac{\tilde{\kappa}^{-1}}{2(1+\delta^{-1})} = (1- \varepsilon)^{-1} > 1.
\]
In particular the eigenspace $\tilde{z} \mapsto U_J(y + \tilde{z}\zeta)$ is analytic in the vicinity of $\tilde{z} = 1$. By Lemma \ref{lmm:separately} the eigenspace $U_J$ is now separately complex-analytic in the vicinity of $z$. Since $z \in E(\tau)$ was arbitrary, we see that the eigenspace is separately complex-analytic in $E(\tau)$. Therefore, we may take Hartogs's theorem (Theorem 2.2.8 in \cite{hormander66}) and extend it to infinite dimensions (Definition 2.3.1, Proposition 3.1.2 and Theorem 3.1.5 in \cite{herve89}) to see that the eigenspace is jointly complex-analytic in $E(\tau)$.
\end{proof}

\section{Stochastic collocation on sparse grids}
\label{sec:collocation}

In the following we formulate a class of anisotropic sparse grid collocation operators defined with respect to finite and monotone multi-index sets. Our method falls into the abstract framework of \cite{andreevschwab12}, which in turn generalizes the collocation methods introduced earlier in e.g. \cite{nobiletemponewebster08a} and \cite{nobiletemponewebster08b}.

\subsection{General multi-index collocation}

We start by defining standard one-dimensional Lagrange interpolation operators, and then extend these to multiple dimensions in a sparse fashion. The interpolation points are chosen to be zeros of orthogonal (Legendre) polynomials.

Let $L_p$ denote the univariate Legendre polynomial of degree $p \in \N_0$, $\{ \chi_k^{(p)} \}_{k=0}^p$ denote the abscissae of $L_{p+1}$ and $\{ w_k^{(p)} \}_{k=0}^p$ denote the associated Gauss-Legendre quadrature weights. We define one-dimensional interpolation operators $\mc{I}^{(m)}_p$ which map a function $f \in C([-1,1])$ to the unique polynomial of degree $p$ that interpolates $f$ at the points $\{ \chi_k^{(p)} \}_{k=0}^p$. This may be written in Lagrange form as
\begin{equation}
\label{eq:colloperator1d}
\left(\mc{I}^{(m)}_p f \right) \! (y_m) = \sum_{k=0}^p f \!\left(\chi_k^{(p)} \right) \ell_k^{(p)} (y_m),
\end{equation}
where $\{ \ell_k^{(p)} \}_{k=0}^p$ are the standard Lagrange basis polynomials of degree $p$. We also have an alternative representation
\begin{equation}
\label{eq:legenrerepresentation1d}
\left(\mc{I}^{(m)}_p f \right) \! (y_m) = \sum_{k = 0}^p d_k(f) L_k(y_m), \quad p \in \N_0,
\end{equation}
where the coefficients $\{d_k\}_{k=0}^p$ are given by
\[
d_k(f) = \int_{-1}^1 \left(\mc{I}^{(m)}_p f \right) \! (y_m) L_k(y_m) \ \frac{d y_m}{2} = \sum_{j=0}^p f(\chi_j^{(p)}) L_k(\chi_j^{(p)}) w_j^{(p)}.
\]
This is due to the fact that Gauss-Legendre quadrature of order $p$ integrates any polynomial of degree $2p +1$ exactly. For more information we refer to \cite{davis75}.

Now let $(\mbb{N}_0^{\infty})_c$ denote the set of all multi-indices with finite support, i.e.,
\[
(\mbb{N}_0^{\infty})_c := \{ \a \in \mbb{N}_0^{\infty} \ | \ \# \supp(\a) < \infty \},
\]
where $\supp(\a) = \{ m \in \mbb{N} \ | \ \a_m \not= 0 \}$. Given a finite set $\mc{A} \subset (\mbb{N}_0^{\infty})_c$ we define the greatest active dimension $M_{\! \mc{A}} := \max \{ m \in \N \ | \ \exists \a \in \mc{A} \textrm{ s.t. } \a_m \not= 0 \}$. For $\a, \b \in \mc{A}$ we write $\a \le \b$ if $\a_m \le \b_m$ for all $m \ge 1$. We call the multi-index set $\mc{A}$ monotone if whenever $\b \in (\mbb{N}_0^{\infty})_c$ is such that $\b \le \a$ for some $\a \in \mc{A}$, then $\b \in \mc{A}$.

Given a finite and monotone set $\mc{A} \subset (\mbb{N}_0^{\infty})_c$ we define the sparse collocation operator
\begin{equation}
\label{eq:colloperator}
\mc{I}_{\mc{A}} := \sum_{\alpha \in \mc{A}} \bigotimes_{m \in \supp{\alpha}} \! \left( \mc{I}_{\alpha_m}^{(m)} - \mc{I}_{\alpha_m - 1}^{(m)} \right)
\end{equation}
with the convention $\mc{I}_{-1}^{(m)} := 0$. The operator \eqref{eq:colloperator} may be rewritten in a computationally more convenient form
\begin{equation}
\label{eq:interpolationreformulation}
\mc{I}_{\mc{A}} = \sum_{\alpha \in \mc{A}} c_{\a} \bigotimes_{m = 1}^{M_{\! \mc{A}}} \mc{I}_{\a_m}^{(m)}
\end{equation}
with coefficients
\[
c_{\a} := \! \sum_{\b \in \mc{A}} \mbf{1}_{\{\b - 1 \le \a \le \b\}} (-1)^{\norm{\alpha - \beta}{\ell^1({\N})}}.
\]
Note that aggregate quantities of our collocated solution may now be computed simply by applying Gauss-Legendre quadrature rules on the components of \eqref{eq:interpolationreformulation}.

By following Lemma 5 in \cite{andreevschwab12} we may express the collocated solution \eqref{eq:interpolationreformulation} as an expansion of multivariate Legendre polynomials. Given $\b, \g \in \mc{A}$ we define multi-dimensional collocation points
\[
\chi_{\g}^{(\b)} := \left(\chi_{\g_1}^{(\b_1)}, \ldots, \chi_{\g_{M_{\! \mc{A}}}}^{(\b_{M_{\! \mc{A}}})}, 0, 0, \ldots \right) \in \Gamma,
\]
associated multi-dimensional quadrature weights 
\[
w_{\g}^{(\b)}(y) := w_{\g_1}^{(\b_1)}(y_1) \cdots w_{\g_{M_{\! \mc{A}}}}^{(\b_{M_{\! \mc{A}}})}(y_{M_{\! \mc{A}}})
\]
and tensorized Legendre polynomials
\[
\Lambda_{\a}(y) := L_{\a_1}(y_1) \cdots L_{a_{M_{\! \mc{A}}}}(y_{M_{\! \mc{A}}}).
\]
Using the relation \eqref{eq:legenrerepresentation1d} we obtain
\begin{equation}
\label{eq:legendrerepresentation}
\left(\mc{I}_{\mc{A}} v \right) = \sum_{\alpha \in \mc{A}} c_{\a} \left( \bigotimes_{m = 1}^{M_{\! \mc{A}}} \mc{I}_{\a_m}^{(m)} \right) v = \sum_{\alpha \in \mc{A}} c_{\a} \sum_{\b \le \a} \Lambda_{\b} \sum_{\g \le \a} w_{\g}^{(\a)} \Lambda_{\b}(\chi_{\g}^{(\a)}) v(\chi_{\g}^{(\a)})
\end{equation}
so that
\[
\left( \mc{I}_{\mc{A}} v \right)(y) = \sum_{\a \in \mc{A}} d_{\a}(v) \Lambda_{\a}(y)
\]
with expansion coefficients given by
\[
d_{\a}(v) := \sum_{\b \in \mc{A}} \mbf{1}_{\{\b \ge \a\}} c_{\b} \sum_{\g \le \b} w_{\g}^{(\b)} \Lambda_{\a}(\chi_{\g}^{(\b)}) v(\chi_{\g}^{(\b)}).
\]
This expression is particularly convenient for evaluating our numerical solution in polynomial form. The number of collocation points required to evaluate the solution from equation \eqref{eq:legendrerepresentation} is
\[
N_{\mc{A}} := \# \{ \chi_{\g}^{(\a)} \in \Gamma \ | \ \a \in \mc{A}, \g \le \a \} = \sum_{\a \in \mc{A}} \prod_{m \in \supp{\a}} (\a_m + 1).
\]

\subsection{Convergence for a class of monotone multi-index sets}

The convergence rate of our collocation scheme depends on both the regularity of the solution at hand as well as the selection of the underlying multi-index sets. Here we follow the framework of \cite{bieriandreevschwab09} and \cite{andreevschwab12} and restrict ourselves to a particular choice of monotone multi-index sets. We then recapitulate the main convergence results from \cite{andreevschwab12}.

Given a sequence $\eta = (\eta_1, \eta_2, \ldots)$ such that $1 > \eta_1 \ge \eta_2 \ge \ldots \ge 0$ and $\eta_m \to 0$ we define the multi-index set
\begin{equation}
\label{eq:miset}
\mc{A}_{\varepsilon}(\eta) := \{ \a \in (\N^{\infty}_0)_c \ \vert \ \eta^{\a} \ge \varepsilon \}, \quad \varepsilon > 0
\end{equation}
where $\eta^{\a} := \prod_{m \in \supp{\a}} \eta_m^{\a_m}$ (with the convention $0^0 := 1$). The set $\mc{A}_{\varepsilon}(\eta)$ is now clearly finite and monotone. In view of Theorem \ref{thm:analyticity} we may set
\begin{equation}
\label{eq:eta}
\eta_m := \sup_{n \ge m} \frac{1}{\rho_n}, \quad m \in \N,
\end{equation}
where
\begin{equation}
\label{eq:rho}
\rho_m := \tau_m + \sqrt{1 + \tau^2_m}, \quad m \in \N
\end{equation}
is equal to the sum of the semiaxes of a Bernstein ellipse (see \cite{davis75} p. 19-20 and 312). We then obtain the following result.

\begin{prp}
\label{prp:convergence}
Let $H$ be a Hilbert space. Assume that $v \!: \Gamma \to H$ admits a complex-analytic extension in the region
\[
E(\tau) := \{ z \in \mbb{C}^{\infty} \ | \ \dist(z_m, [-1,1]) < \tau_m \}
\]
where $\tau = (\tau_1, \tau_2, \ldots)$ is a sequence of positive numbers such that $\tau_m \to \infty$. Define $\mc{A_{\varepsilon}}(\eta) \subset (\N_0^{\infty})_c$ according to \eqref{eq:miset}, \eqref{eq:eta} and \eqref{eq:rho}. Assume that $\eta_m m^{\sigma} \to 0$ for some $\sigma > 2(1 + \log 4)$. Then for any $1 > \varkappa > 2(1 + \log 4)/ \sigma$ there exists $C > 0$ such that
\[
\norm{v - \mc{I}_{\mc{A}_{\varepsilon}(\eta)}v}{L^2_{\nu}(\Gamma) \otimes H} \le C \varepsilon^{1 - \varkappa} \norm{v}{L^{\infty}(\overline{E(\tau)}; H)}
\]
for all $0 < \varepsilon \le \eta_1$. Here $\nu$ denotes the uniform probability measure on $\Gamma$. 
\end{prp}

\begin{proof}
Clearly there exists $M \ge 1$ such that
\[
\eta_m \le \eta_m' := (m+1)^{-\sigma} \quad \forall m > M.
\]
Lemma 7 and Proposition 3 in \cite{andreevschwab12} now imply that the so-called asymptotic overhead order of $\eta$ is
\[
\varkappa^*(\eta) \le \varkappa^*(\eta') \le 2(1 + \log 4)/\sigma.
\]
Hence, the result follows by taking $\varkappa > 2(1 + \log 4)/\sigma \ge \varkappa^*(\eta)$ in Theorem 6 (note also Remark 10) of \cite{andreevschwab12}.
\end{proof}

We may also estimate the convergence rate with respect to the number of collocation points.

\begin{thm}
\label{thm:convergencepoints}
Let the conditions of Proposition \ref{prp:convergence} hold. Then for any $s < \sigma - 2(1 + \log 4)$ there exists $C = C(v) > 0$ such that
\[
\norm{v - \mc{I}_{\mc{A}_{\varepsilon}(\eta)}v}{L^2_{\nu}(\Gamma) \otimes H} \le C(v) N_{\! \mc{A}_{\varepsilon}(\eta)}^{-s/2}
\]
as $\varepsilon \to 0$.
\end{thm}

\begin{proof}
By Lemma 6 in \cite{andreevschwab12} the cardinality of the multi-index sets $\mc{A}_{\varepsilon}(\eta)$ at the limit $\varepsilon \to 0$ is given by
\[
\# \mc{A}_{\varepsilon}(\eta) = F \left(\varepsilon^{-1/\sigma} \right), \quad \textrm{ where } F(x) = x \frac{e^{2\sqrt{\log x}}}{2 \sqrt{\pi}(\log x)^{3/4}}(1 + \mc{O}(1/\log x)).
\]
For any $\omega > 1$ we have the bound
\[
\# \mc{A}_{\varepsilon}(\eta) = F \left(\varepsilon^{-1/\sigma} \right) \lesssim \varepsilon^{-\omega/\sigma}
\]
and therefore $\varepsilon \lesssim (\# \mc{A}_{\varepsilon}(\eta))^{-\tilde{\sigma}}$ whenever $\tilde{\sigma} < \sigma$. Proposition \ref{prp:convergence} now implies that
\[
\norm{v - \mc{I}_{\mc{A}_{\varepsilon}(\eta)}v}{L^2_{\nu}(\Gamma) \otimes H} \lesssim (\# \mc{A}_{\varepsilon}(\eta))^{-\tilde{\sigma}(1 - \varkappa)}
\]
for any $1 > \varkappa > 2(1 + \log4)/\sigma$. Given $s < \sigma - 2(1 + \log 4)$ we may now choose $\varkappa > 2(1 + \log4)/\sigma$ and $\tilde{\sigma} < \sigma$ so that
\[
s < \tilde{\sigma}(1 - \varkappa)
\]
and therefore
\[
\norm{v - \mc{I}_{\mc{A}_{\varepsilon}(\eta)}v}{L^2_{\nu}(\Gamma) \otimes H} \lesssim (\# \mc{A}_{\varepsilon}(\eta))^{-s}.
\]
Finally, Lemma 4 in \cite{andreevschwab12} implies that $N_{\! \mc{A}_{\varepsilon}(\eta)} \le (\# \mc{A}_{\varepsilon}(\eta))^2$ and the claim follows.
\end{proof}

\subsection{Application to eigenspace computation}

In the following we briefly illustrate how the previous results can be applied to eigenspace computation in the particularly interesting case that the coefficients $\kappa = (\kappa_1, \kappa_2, \ldots)$ in \eqref{eq:decay} decay at an algebraic rate.

Suppose that the eigenspace $y \to U_J(y)$ of the problem \eqref{eq:pevp} is isolated for some finite $J \subset \N$. In addition to \eqref{eq:ellipticity} and \eqref{eq:decay} assume that
\[
\kappa_m \lesssim (m+1)^{-\varsigma}, \quad m \in \N,
\]
where $\varsigma-1 > \sigma > 2(1 + \log 4)$. Note that this implies $\kappa \in \ell^p(\N)$ for $p > \varsigma^{-1}$. By Theorem \ref{thm:analyticity} the eigenspace $y \to U_J(y)$ admits a complex analytic extension in the region
\[
E(\tau) := \{ z \in \mbb{C}^{\infty} \ | \ \dist(z_m, [-1,1]) < \tau_m \},
\]
where $\tau_m \gtrsim (m+1)^{\varsigma(1-p)}$. The sequence $\eta$ in \eqref{eq:eta} now converges at the rate
\[
\eta_m \lesssim \tau_m^{-1} \lesssim (m+1)^{-\varsigma(1-p)},
\]
where $\varsigma(1-p) > \sigma$ for sufficiently large $p$. Therefore $\eta_m m^{\sigma} \to 0$ and the conditions of Proposition \ref{prp:convergence} and Theorem \ref{thm:convergencepoints} hold. This means that we should expect the convergence rate
\begin{equation}
\label{eq:convergence}
\norm{\hat{u}_i - \mc{I}_{\mc{A}_{\varepsilon}}(\hat{u}_i)}{L^2_{\nu}(\Gamma) \otimes V} \le C(\hat{u}_i) N_{\! \mc{A}_{\varepsilon}(\eta)}^{-s/2}, \quad s > \varsigma - 3 - 2\log 4,
\end{equation}
when the sparse stochastic collocation algorithm is used to approximate the canonical basis vectors $\{ \hat{u}_i \}_{i \in J}$ of $U_J$.

\begin{rmk}
We may apply the Gram–Schmidt process at every collocation point in order to obtain an approximately orthonormal basis for $U_J$.
\end{rmk}

\section{Numerical examples: stochastic diffusion equation}
\label{sec:numerical}

In this section we present numerical examples to verify the convergence rate \eqref{eq:convergence} of our stochastic collocation algorithm. To this end we consider the model problem from Remark \ref{rmk:modelproblem}, i.e., the eigenvalue problem of a stochastic diffusion operator, and compute a canonical basis for one of its isolated eigenspaces. A standard finite element method is employed to obtain the discretization in physical space: In each of the examples the deterministic mesh is a grid of second order elements of diameter at most $h$, and the finite element space is then obtained by projecting the variational equation \eqref{eq:pevpvarform} onto the corresponding finite approximation space $V_h \subset V$. In the context of the current paper, however, we focus on the convergence in the parameter space and disregard the approximation error related to the spatial discretization.

Our numerical examples cover two different scenarios. First, we consider the model problem on the unit square and assume that the diffusion coefficient is constant in the second coordinate direction (Example~\ref{ex:model}). By separation of variables we may then either reduce this problem to a one-dimensional problem (in physical space), where each eigenvalue is well separated, or we may solve the full two-dimensional problem, where eigenmodes are tangled together. In particular we show that our subspace algorithm applied to the full two-dimensional problem converges to the same result as when a simple eigenvalue algorithm is employed to the dimensionally reduced problem. Second, we apply our algorithm to the model problem in a dumbbell shaped domain and let the diffusion coefficient depend on both spatial coordinates. In this case the crossing of eigenvalues is intrinsic by nature and the eigenmodes can not be untangled by mere separation of variables. We illustrate that similar convergence rates hold as in the first example.

%All of the following examples have been computed on a standard laptop computer with an Intel Core i5 processor and 8 GB RAM. The reported computation times reflect sequential computations, i.e., without any parallelization.

\subsection{Reducible uncertainty model in the unit square}

Consider the stochastic diffusion problem from Remark \ref{rmk:modelproblem} with $D := (0,1)^2$. We let $a_0 := 1 + C_D$, where $C_D$ denotes the Poincar\'e constant for $D$, i.e., the inverse of the smallest eigenvalue of the Laplacian with Dirichlet boundary condition. For $m \in \N$ and $\varsigma > 1$ we set
\[
a_m(x) := (m + 1)^{-\varsigma} \sin(m \pi x_1), \quad x = (x_1, x_2) \in D.
\]
It is easy to see that the assumptions \eqref{eq:ellipticity} and \eqref{eq:decay} hold with $\alpha_0 = 1$ and $\kappa_m = (m+1)^{-\varsigma}$. For $\varsigma$ large enough, in particular for $\varsigma \ge 2$, we have $\norm{\kappa}{\ell^1(\N)} < 1$. In the following examples we have used the values $\varsigma = 3$ and $\varsigma = 6$. Moreover, we have used multi-index sets $\mc{A}_{\varepsilon}(\eta) \subset (\N_0^{\infty})_c$ as defined in the equations \eqref{eq:miset} -- \eqref{eq:rho}. The sequence $\tau = (\tau_1, \tau_2, \ldots)$ is set as $\tau_m := (m+1)^{\varsigma-1}$ which is in accordance with e.g. the numerical experiments in \cite{hakulalaaksonen19b}.

Since the diffusion coefficient $a(x) = a(x_1)$ is independent of $x_2$ we may now reduce the original problem to a one-dimensional problem. By separation of variables we see that functions of the form $u(x,y) = \varphi(x_1, y)\sin(\pi k x_2)$, where $k \in \N$ and $\varphi(x_1, y)$ solves
\begin{equation}
\label{eq:modelproblemreduced}
\left\{ \begin{array}{ll} - \partial_{x_1}(a(x_1, y) \partial_{x_1} \varphi(x_1, y)) + \pi^2 k^2 a(x_1, y) \varphi(x_1, y) = \lambda(y) \varphi(x_1, y), & x_1 \in (0,1) \\
\varphi(0, y) = \varphi(1, y) = 0 \end{array} \right.
\end{equation}
for all $y \in \Gamma$, form a complete set of eigenfunctions for our original problem \eqref{eq:modelproblem}. Classical Sturm-Liouville theory implies that the eigenvalues of \eqref{eq:modelproblemreduced} are simple and separated for every fixed $y \in \Gamma$. This separation of eigenvalues also holds uniformly with respect to $y \in U$, see Section 2.2 in \cite{gilbertgrahamkuoscheichlsloan19}. Hence, we may solve the eigenpairs of this one-dimensional problem via any simple stochastic eigenvalue algorithm such as the one presented in \cite{andreevschwab12}.

Let us now investigate the subspace $U_S$ of our model problem for $S = 3$. In Figure \ref{fig:squarecrossing} we have illustrated the first three eigenvalues of the problem as a function of the first parameter $y_1 \in [-1,1]$ when the rest are held constant. We see that there is an eigenvalue crossing at $y_1 = 0$ as is expected due to symmetry. As a result, there's multiple ways to choose the eigenfunction at this point. One example of the first three eigenfunctions at $y = 0$ has been shown in Figure \ref{fig:squarevecs}. By plugging the calculated eigenvalues and an estimate of $\norm{\kappa}{\ell^1(\N)}$ into \eqref{eq:separationcriterion}, we see that Corollary \ref{cor:separated} holds for $S = 3$ and the subspace $U_3$ is therefore isolated.

\begin{figure}[htb]
\begin{center}
\subfloat[{Eigenvalues $\{ \mu_i \}_{i=1}^4$.}]{\includegraphics[width=0.48\textwidth]{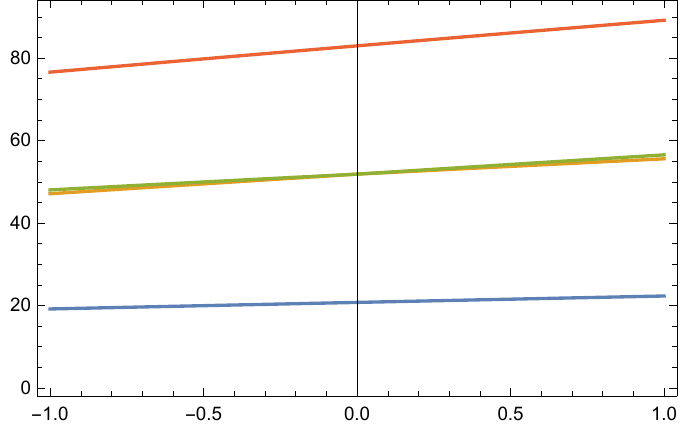}}\quad
\subfloat[{Eigenvalues $\{ \mu_i \}_{i=2}^3$.}]{\includegraphics[width=0.48\textwidth]{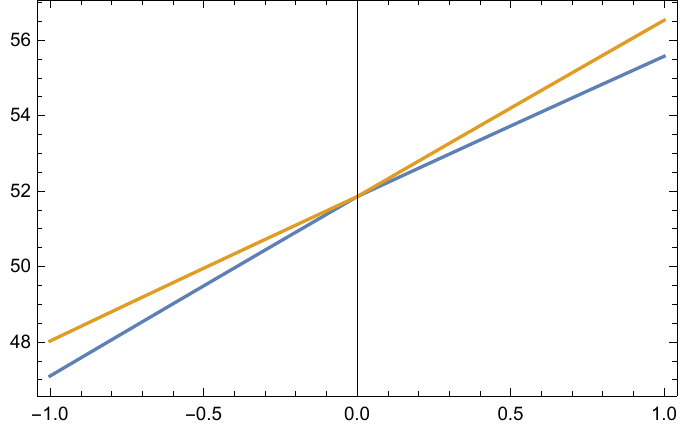}}
\caption{Model problem in the unit square with $\varsigma = 3$: First few eigenvalues as a function of $y_1 \in [-1,1]$ when $y_2 = y_3 = \ldots = 0$.}
\label{fig:squarecrossing}
\end{center}
\end{figure}

Let us first investigate the case $\varsigma = 6$ and employ our sparse stochastic collocation algorithm to compute a canonical basis for the subspace $U_3$. We compute a reference solution $\{ \hat{u}^*_i \}_{i=1}^3$ from the one-dimensional equation \eqref{eq:modelproblemreduced} using a mesh of 800 second order line elements. When computing this reference solution we set $\varepsilon > 0$ so that the number of multi-indices is $\# \mc{A}_{\varepsilon}(\eta) = 28$ and the greatest active dimension is $M_{\mc{A}_{\varepsilon}(\eta)} = 16$. This results in $N_{\mc{A}_{\varepsilon}(\eta)} = 77$ collocation points. Next we compute a series of solutions $\{ \hat{u}^{\varepsilon}_i \}_{i=1}^3$ from the two-dimensional equation \eqref{eq:modelproblem} using different values of $\varepsilon > 0$ and a mesh of 147456 second order quadrilateral elements. Convergence of the approximate basis $\{ \hat{u}^{\varepsilon}_i \}_{i=1}^3$ towards the reference solution $\{ \hat{u}^*_i \}_{i=1}^3$ with respect to the error measure
\[
\theta_{\epsilon} := \left( \sum_{i=1}^3 \norm{\hat{u}^{\varepsilon}_i - \hat{u}^*_i}{L^2_{\nu}(\Gamma) \otimes H^1(D)}^2 \right)^{1/2}
\]
has been illustrated in Figure \ref{fig:squareconv6}. The error behaves like $N_{\mc{A}_{\varepsilon}(\eta)}^{-3.0}$ with respect to the number of collocation points.

\begin{figure}[htb]
\begin{center}
\subfloat[{As a function of $\varepsilon$.}]{\includegraphics[width=0.48\textwidth]{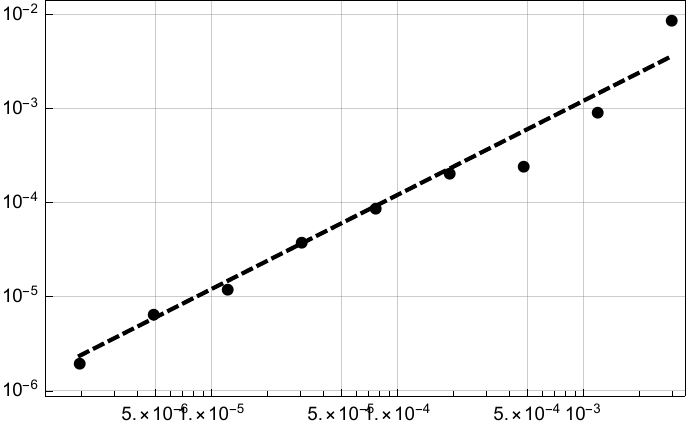}}\quad
\subfloat[{As a function of $N_{\mc{A}_{\varepsilon}(\eta)}$.}]{\includegraphics[width=0.48\textwidth]{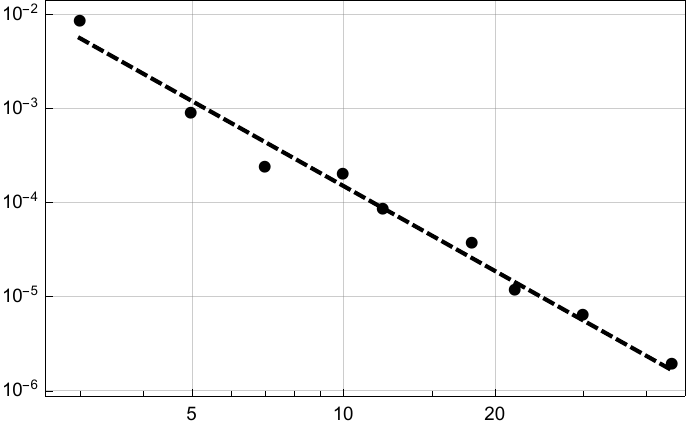}}
\caption{Model problem in the unit square when $\varsigma = 6$: Convergence of the approximate solution $\{ \hat{u}^{\varepsilon}_i \}_{i=1}^3$ to the reference solution $\{ \hat{u}^*_i \}_{i=1}^3$. The points represent values of the error measure $\theta_{\epsilon}$ on a log-log scale. Dashed lines represent algebraic rates $\varepsilon^{1.0}$ and $N_{\mc{A}_{\varepsilon}(\eta)}^{-3.0}$.}
\label{fig:squareconv6}
\end{center}
\end{figure}

We now repeat the previous exercise for $\varsigma = 3$. Note that Theorem \ref{thm:convergencepoints} does not in fact hold for this value of $\varsigma$ but numerically we observe convergence nevertheless. The faster rate of convergence of the terms in the Karhunen-Lo\`eve expansion \eqref{eq:klexpansion} justifies the use of a sparser discretization in physical space. In this case the reference solution $\{ \hat{u}^*_i \}_{i=1}^3$ is obtained from the one-dimensional equation \eqref{eq:modelproblemreduced} using a mesh of 160 second order line elements. For the reference solution we set $\varepsilon > 0$ so that the number of multi-indices is $\# \mc{A}_{\varepsilon}(\eta) = 302$ and the greatest active dimension is $M_{\mc{A}_{\varepsilon}(\eta)} = 129$, which gives us $N_{\mc{A}_{\varepsilon}(\eta)} = 1053$ collocation points. Again we compute a series of solutions $\{ \hat{u}^{\varepsilon}_i \}_{i=1}^3$ from the two-dimensional equation \eqref{eq:modelproblem} using different values of $\varepsilon > 0$ and a mesh of 6724 second order quadrilateral elements. Convergence of the approximate basis $\{ \hat{u}^{\varepsilon}_i \}_{i=1}^3$ towards the reference solution $\{ \hat{u}^*_i \}_{i=1}^3$ has been illustrated in Figure \ref{fig:squareconv3}. In this case the error behaves like $N_{\mc{A}_{\varepsilon}(\eta)}^{-1.5}$ with respect to the number of collocation points.

\begin{figure}[htb]
\begin{center}
\subfloat[{As a function of $\varepsilon$.}]{\includegraphics[width=0.48\textwidth]{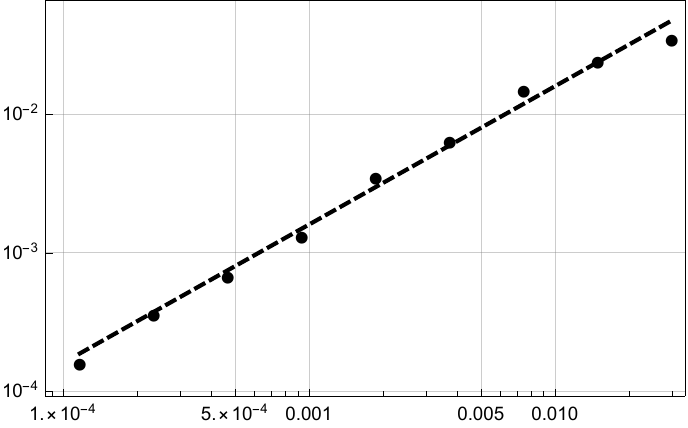}}\quad
\subfloat[{As a function of $N_{\mc{A}_{\varepsilon}(\eta)}$.}]{\includegraphics[width=0.48\textwidth]{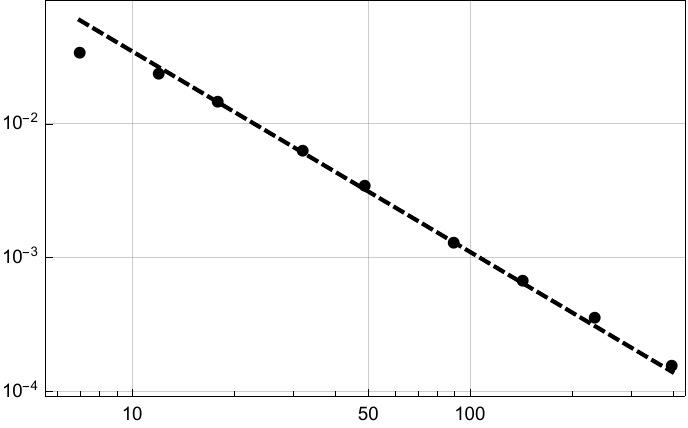}}
\caption{Model problem in the unit square when $\varsigma = 3$: Convergence of the approximate solution $\{ \hat{u}^{\varepsilon}_i \}_{i=1}^3$ to the reference solution $\{ \hat{u}^*_i \}_{i=1}^3$. The points represent values of the error measure $\theta_{\epsilon}$ on a log-log scale. Dashed lines represent algebraic rates $\varepsilon^{1.0}$ and $N_{\mc{A}_{\varepsilon}(\eta)}^{-1.5}$.}
\label{fig:squareconv3}
\end{center}
\end{figure}

\subsection{General uncertainty model in a dumbbell shaped domain}

In this section we consider the stochastic diffusion problem \eqref{eq:modelproblem} in a dumbbell shaped domain. The geometry of the domain $D$ is illustrated in Figure \ref{fig:dumbbelldomain}. Again we let $a_0 := 1 + C_D$, where $C_D$ is the Poincar\'e constant for $D$. For $m \in \N$ and $\varsigma > 1$ we set
\[
a_m(x) := (m + 1)^{-\varsigma} \sin\left( \frac{2 \pi k_m x_1}{2 + w_D} \right) \sin( \pi l_m x_2 ), \quad x = (x_1, x_2) \in D,
\]
where $(k_m, l_m)$ denotes the $m$:th element of $\N^2$ with respect to increasing graded lexicographic order. As previously, the assumptions \eqref{eq:ellipticity} and \eqref{eq:decay} hold with $\alpha_0 = 1$ and $\kappa_m = (m+1)^{-\varsigma}$, whereas for $\varsigma \ge 2$ we have $\norm{\kappa}{\ell^1(\N)} < 1$. In the following examples we again use the values $\varsigma = 3$ and $\varsigma = 6$ and define the multi-index sets $\mc{A}_{\varepsilon}(\eta) \subset (\N_0^{\infty})_c$ according to the equations \eqref{eq:miset} -- \eqref{eq:rho}.

\begin{figure}[htb]
\begin{center}
\includegraphics[width=0.48\textwidth]{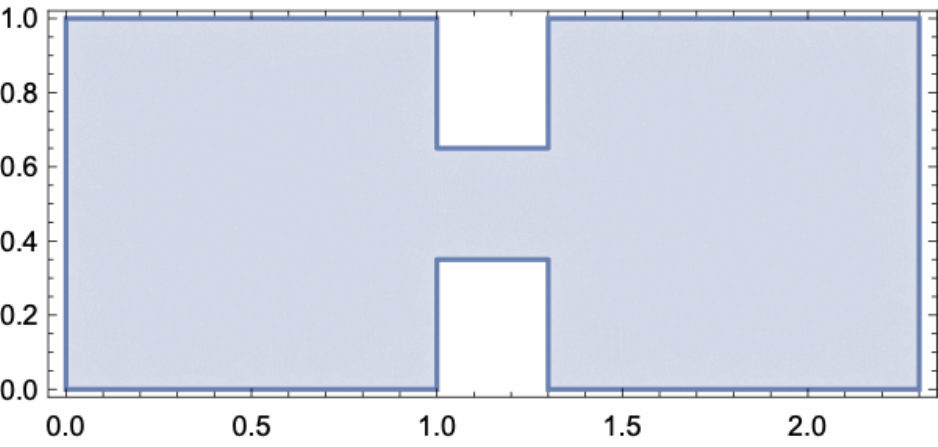}
\caption{The dumbbell domain: Two unit squares connected by a small middle part of size $h_D = w_D = 3/10$ (height and width).}
\label{fig:dumbbelldomain}
\end{center}
\end{figure}

In this example we focus on the subspace $U_J$ for $J = \{ 3,4,5,6 \}$. Figure \ref{fig:dumbbellcrossing} illustrates the corresponding eigenvalues as a function of the first parameter $y_1 \in [-1,1]$ when the rest are held constant. Now it seems that all four eigenvalues are tightly clustered and we observe multiple crossing points for the eigenvalues. An example of the associated eigenfunctions at $y = 0$ has been shown in Figure \ref{fig:dumbbellvecs}. We may again plug the calculated eigenvalues and an estimate of $\norm{\kappa}{\ell^1(\N)}$ into \eqref{eq:separationcriterion}, and see that Corollary \ref{cor:separated} holds for both $S = 2$ and for $S = 6$. With this we conclude that the subspace $U_{\{ 3,4,5,6 \}}$ is isolated.

\begin{figure}[htb]
\begin{center}
\subfloat[{Eigenvalues $\{ \mu_i \}_{i=1}^8$.}]{\includegraphics[width=0.48\textwidth]{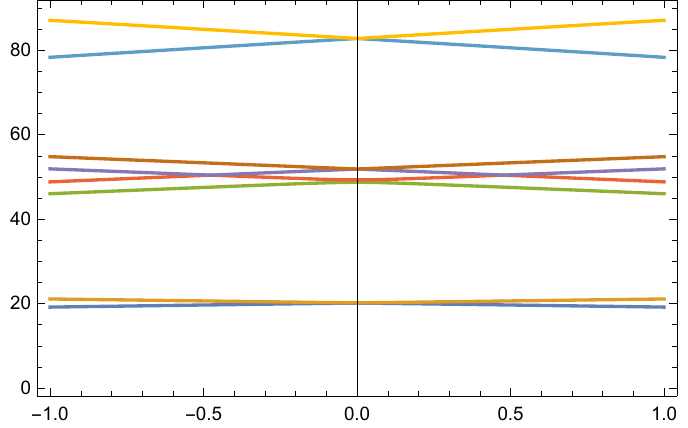}}\quad
\subfloat[{Eigenvalues $\{ \mu_i \}_{i=3}^6$.}]{\includegraphics[width=0.48\textwidth]{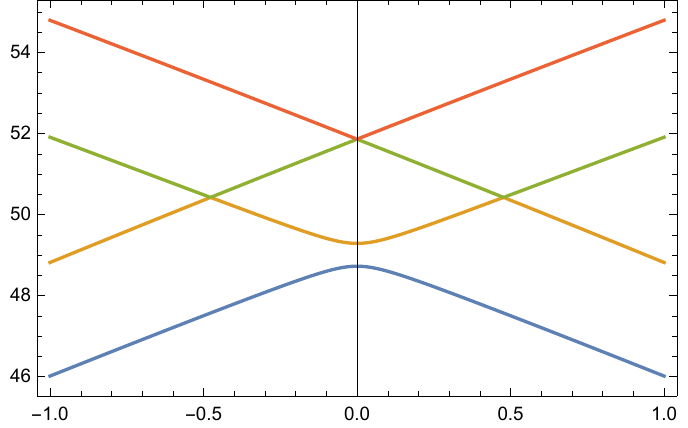}}
\caption{Model problem in the dumbbell domain with $\varsigma = 3$: First few eigenvalues as a function of $y_1 \in [-1,1]$ when $y_2 = y_3 = \ldots = 0$.}
\label{fig:dumbbellcrossing}
\end{center}
\end{figure}

\begin{figure}[htb]
\begin{center}
\subfloat[{Eigenfunction $u_3$.}]{\includegraphics[width=0.46\textwidth]{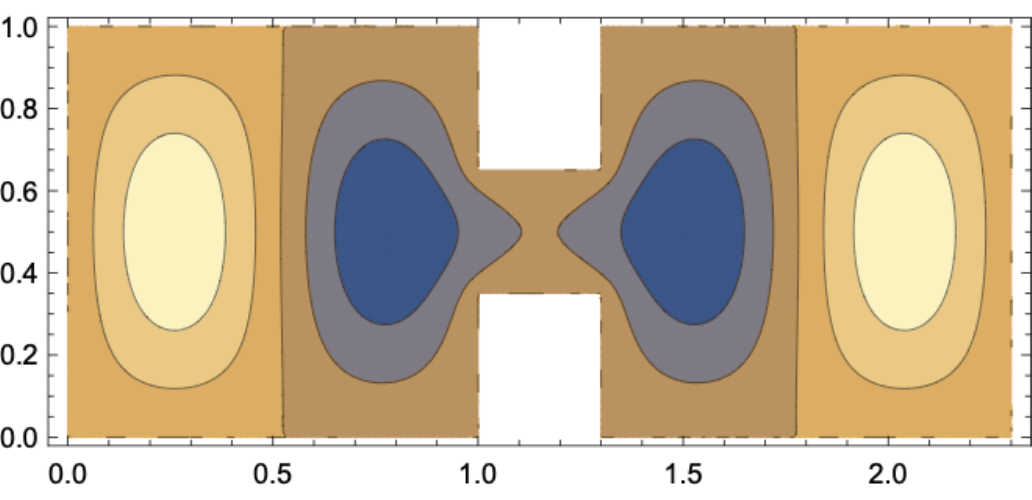}}\qquad
\subfloat[{Eigenfunction $u_4$.}]{\includegraphics[width=0.46\textwidth]{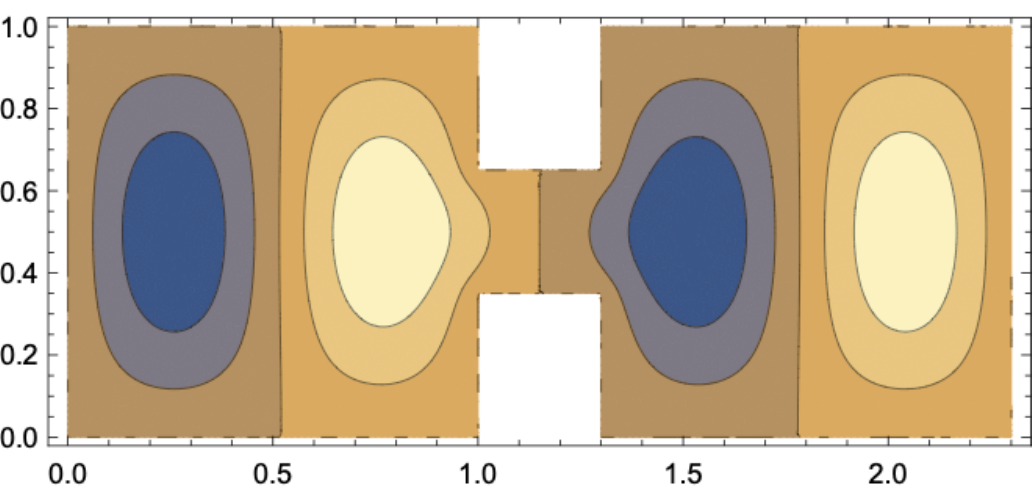}}\qquad
\subfloat[{Eigenfunction $u_5$.}]{\includegraphics[width=0.46\textwidth]{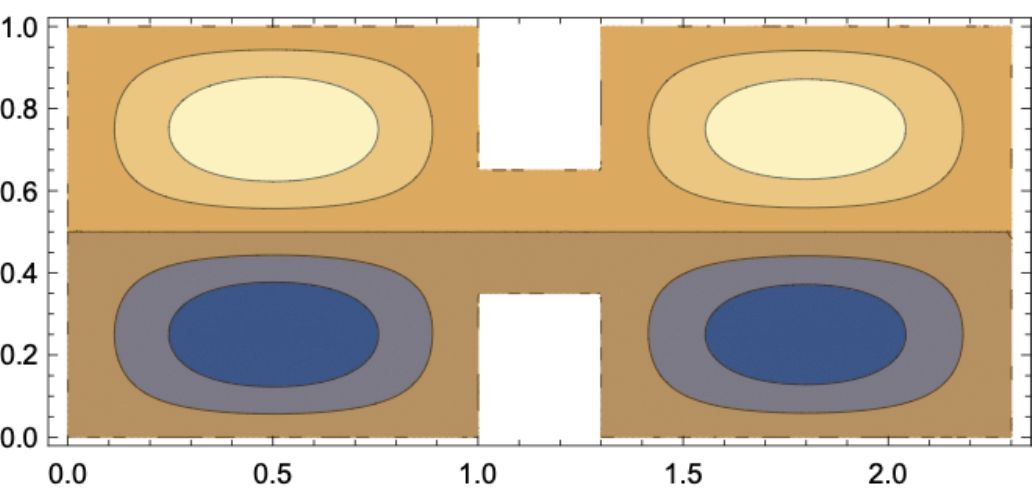}}\qquad
\subfloat[{Eigenfunction $u_6$.}]{\includegraphics[width=0.46\textwidth]{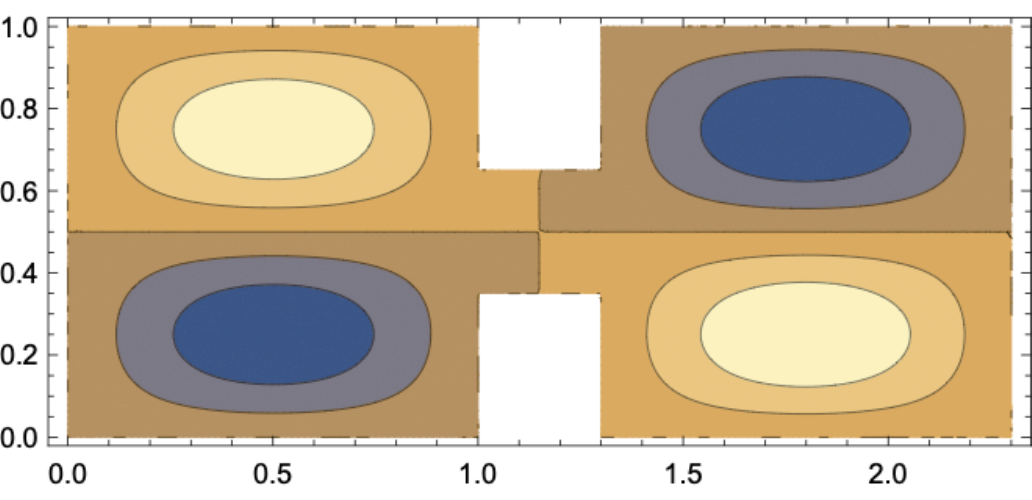}}
\caption{Model problem in the dumbbell domain: Eigenfunctions 3 to 6 at $y = 0$.}
\label{fig:dumbbellvecs}
\end{center}
\end{figure}

We start our convergence analysis from the case $\varsigma = 6$. We compute a reference solution $\{ \hat{u}^*_i \}_{i=3}^6$ using a mesh of 220756 second order triangular elements. In computing this reference solution we set $\varepsilon > 0$ so that the number of multi-indices is $\# \mc{A}_{\varepsilon}(\eta) = 28$ and the greatest active dimension is $M_{\mc{A}} = 16$. This results in $N_{\mc{A}_{\varepsilon}(\eta)} = 77$ collocation points. We then compute a series of solutions $\{ \hat{u}^{\varepsilon}_i \}_{i=3}^6$ using the same deterministic mesh. Convergence of these approximate basis vectors $\{ \hat{u}^{\varepsilon}_i \}_{i=3}^6$ towards the reference solution $\{ \hat{u}^*_i \}_{i=3}^6$ with respect to the error measure
\[
\theta_{\epsilon} := \left( \sum_{i=3}^6 \norm{\hat{u}^{\varepsilon}_i - \hat{u}^*_i}{L^2_{\nu}(\Gamma) \otimes H^1(D)}^2 \right)^{1/2}
\]
has been illustrated in Figure \ref{fig:dumbbellconv6}. The error behaves like $N_{\mc{A}_{\varepsilon}(\eta)}^{-3.0}$ with respect to the number of collocation points.

\begin{figure}[htb]
\begin{center}
\subfloat[{As a function of $\varepsilon$.}]{\includegraphics[width=0.48\textwidth]{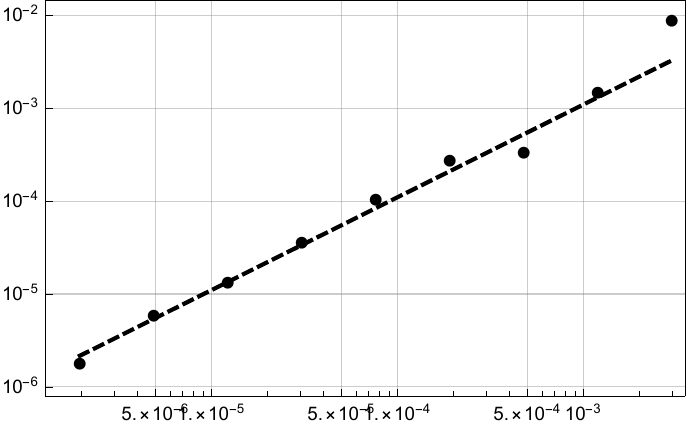}}\quad
\subfloat[{As a function of $N_{\mc{A}_{\varepsilon}(\eta)}$.}]{\includegraphics[width=0.48\textwidth]{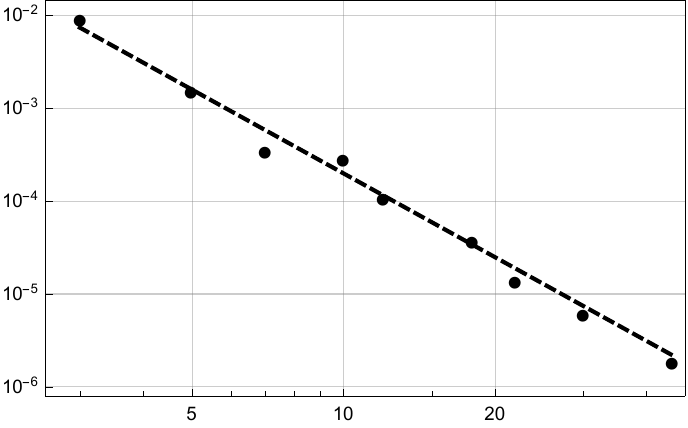}}
\caption{Model problem in the dumbbell domain when $\varsigma = 6$: Convergence of the approximate solution $\{ \hat{u}^{\varepsilon}_i \}_{i=1}^3$ to the reference solution $\{ \hat{u}^*_i \}_{i=1}^3$. The points represent values of the error measure $\theta_{\epsilon}$ on a log-log scale. Dashed lines represent algebraic rates $\varepsilon^{1.0}$ and $N_{\mc{A}_{\varepsilon}(\eta)}^{-3.0}$.}
\label{fig:dumbbellconv6}
\end{center}
\end{figure}

Finally we repeat the previous exercise for $\varsigma = 3$. Again, Theorem \ref{thm:convergencepoints} does not hold for this value of $\varsigma$ but we still observe convergence numerically. In this case we use a mesh of 10074 second order triangular elements. We compute a reference solution $\{ \hat{u}^*_i \}_{i=3}^6$ with $\varepsilon > 0$ such that the number of multi-indices is $\# \mc{A}_{\varepsilon}(\eta) = 302$ and the greatest active dimension is $M_{\mc{A}} = 129$, which gives us $N_{\mc{A}_{\varepsilon}(\eta)} = 1053$ collocation points. We then compute a series of solutions $\{ \hat{u}^{\varepsilon}_i \}_{i=3}^6$ using different values of $\varepsilon > 0$ and the same deterministic mesh. Convergence of the approximate basis $\{ \hat{u}^{\varepsilon}_i \}_{i=3}^6$ towards the reference solution $\{ \hat{u}^*_i \}_{i=3}^6$ has been illustrated in Figure \ref{fig:dumbbellconv3}. Again the error behaves like $N_{\mc{A}_{\varepsilon}(\eta)}^{-1.5}$ with respect to the number of collocation points.

\begin{figure}[htb]
\begin{center}
\subfloat[{As a function of $\varepsilon$.}]{\includegraphics[width=0.48\textwidth]{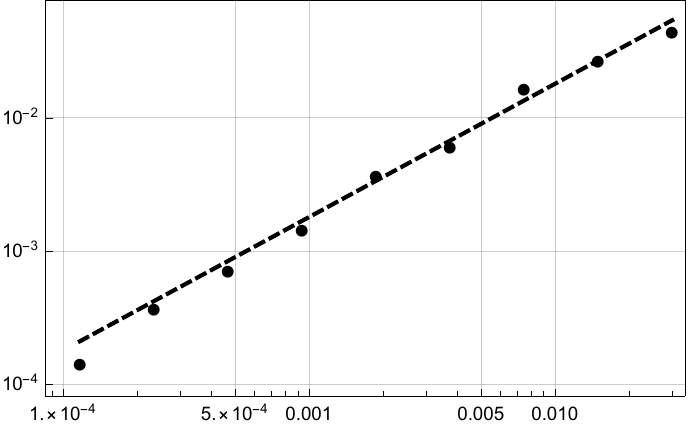}}\quad
\subfloat[{As a function of $N_{\mc{A}_{\varepsilon}(\eta)}$.}]{\includegraphics[width=0.48\textwidth]{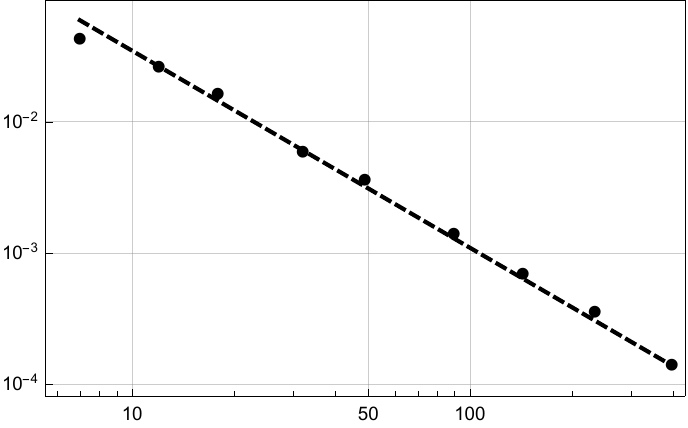}}
\caption{Model problem in the dumbbell domain when $\varsigma = 3$: Convergence of the approximate solution $\{ \hat{u}^{\varepsilon}_i \}_{i=1}^3$ to the reference solution $\{ \hat{u}^*_i \}_{i=1}^3$. The points represent values of the error measure $\theta_{\epsilon}$ on a log-log scale. Dashed lines represent algebraic rates $\varepsilon^{1.0}$ and $N_{\mc{A}_{\varepsilon}(\eta)}^{-1.5}$.}
\label{fig:dumbbellconv3}
\end{center}
\end{figure}

\section{Conclusions and future prospects}
\label{sec:conclusions}

We have studied the eigenvalue problem of an operator that depends affinely on a countable number of input parameters. We have shown that if a set of eigenvalues is strictly separated from the rest of the spectrum, then the subspace spanned by the corresponding eigenvectors exhibits analytic dependence on the input parameters. We have then defined a set of canonical basis vectors that span this subspace and are smooth also in the vicinity of eigenvalue crossings. Hence, stochastic collocation methods, with known rates of convergence, may be applied to compute these canonical basis vectors.

In our numerical examples we have applied a sparse multi-index stochastic collocation algorithm to computing subspaces of a stochastic diffusion operator written in its Karhunen-Lo\`eve expansion. Our examples show that optimal rates of convergence hold even in the presence of eigenvalue crossings. In fact, in our examples we observe fast rates of convergence even if the terms in the Karhunen-Lo\`eve series decay too slowly for the current theory to hold. The validity of our collocated solution has been verified by comparing the results of our subspace algorithm to the results of a simple eigenvalue algorithm applied to the same problem in a dimensionally reduced form. We note, that a computationally more efficient solver for the problem at hand could be obtained by a sparse composition of the stochastic and spatial approximation operators, see e.g. \cite{bieriandreevschwab09} and \cite{andreevschwab12}.

In the current paper we have introduced an algorithm for computing a basis for the eigenspace of interest with the drawback that the individual eigenvalues and eigenvectors are lost in this process. In some cases, see for instance \cite{hakulalaaksonen19a}, we could try to regain the eigenvalues by tracking smooth branches of the eigenmodes within the parameter space. However, in the general case with more than one parameter, smooth branches might not always exist: See the example by Rellich given in \cite{reedsimon78}, page 60. In order to overcome this problem, we would need to consider non-smooth solution methods, i.e., ones that does not rely on the analyticity of the solution. This topic is left for future research.

\bibliography{bibfile}

\end{document}